\title[]{Octonion-Valued Forms and the Canonical 8-Form on Riemannian Manifolds with a $Spin(9)$-Structure} 
\author{Jan Kotrbat{\'y}}\thanks{Supported by DFG Grant WA3510/1-1.}
\email{jan.kotrbaty@uni-jena.de}
\begin{document}

\maketitle

\begin{abstract}
It is well known that there is a unique $Spin(9)$-invariant 8-form on the octonionic plane that naturally yields a canonical differential 8-form on any Riemannian manifold with a weak $Spin(9)$-structure. Over the decades, this invariant has been studied extensively and described in several equivalent ways. In the present article, a new explicit algebraic formula for the $Spin(9)$-invariant 8-form is given. The approach we use generalises the standard expression of the K\"{a}hler 2-form. Namely, the invariant 8-form is constructed only from the two octonion-valued coordinate 1-forms on the octonionic plane. For completeness, analogous expressions for the Kraines form, the Cayley calibration and the associative calibration are also presented.
\end{abstract}

\section{Introduction}

One of the most common features of the reals as well as of the complex numbers is the compatibility of their product with the norm. 
Famously, besides $\RR$ and $\CC$, this is an exclusive property of only two other spaces: the four-dimensional algebra $\HH$ of {\it quaternions} and the eight-dimensional algebra $\OO$ of {\it octonions} (sometimes also referred to as {\it Cayley numbers} or {\it octaves}).

Since this striking result of Hurwitz \cite{Hurwitz1922} was published, the fact that precisely four normed division algebras exist has turned out to be extremely generic as it has been observed to underlie a wide variety of other classification theorems. Remember, for instance, simple Lie algebras: the three classical series 
correspond to $\RR$, $\CC$ and $\HH$ (see e.g. \cite{knapp}, \S I.8) while the five exceptions 
are closely tied to $\OO$ (see \S4 of the excellent survey \cite{baez}). Formally real Jordan algebras are categorised similarly, see \cite{jordan}. Or, as shown by Adams \cite{adams1960}, there exist precisely four Hopf fibrations; see \S\ref{ss:spin9} for construction of the `octonionic' one, the others are obtained simply replacing $\OO$ by the other normed division algebras (see \cite{baez,hopf}). And the list continues.


In the forties, Borel \cite{borel} and Montgomery with Samelson \cite{montgomery} classified compact connected Lie groups acting transitively and effectively on a unit sphere. These are
\begin{align}
\begin{gathered}
\label{eq:BMS}
SO(n)=SO(\RR^n);\\
U(n),SU(n)\subset SO(\CC^n);\\
Sp(n),Sp(n)U(1),Sp(n)Sp(1)\subset SO(\HH^n);\\
G_2\subset SO(\Imag\OO); \, Spin(7)\subset SO(\OO);\, Spin(9)\subset SO(\OO^2);
\end{gathered}
\end{align}
in each case the action on the unit sphere in $V$ is inherited from $SO(V)$. The reason to write $\HH^n$ instead of $\RR^{4n}$, $\Imag\OO$ (octonions with zero real part, see \S2.1) rather than $\RR^7$, etc., is that all the listed groups are naturally realised in terms of the respective normed-division-algebra structures (see \S2.2 for $Spin(9)$). A few years later, Berger's classification \cite{berger1955} of admissible holonomies of non-symmetric Riemannian manifolds resulted independently in almost the same list as \eqref{eq:BMS} lacking only the series $Sp(n)U(1)$. The conjecture that this was not a coincidence was then settled by Simons \cite{simons} (see also \cite{besse}, \S10).

Over the decades, all the holonomies from Berger's list have been realised (for a detailed exposition see \cite{salamon1989}), with one exception, though. It was proposed by Alekseevskij \cite{alekseevskii} and later proven by Brown and Gray \cite{BG} that in fact any manifold with holonomy $Spin(9)$ is isometric to either $\OO P^2\cong F_4/Spin(9)$ or to its non-compact dual $\OO H^2\cong F_{4(-20)}/Spin(9)$. In particular, such a manifold is always symmetric. Even though the significance of the Cayley planes $\OO P^2$ and $\OO H^2$ was undeniable for many reasons (let us mention, for instance, that $\OO P^2$ is an example of non-Desarguesian plane \cite{moufang}, see also \S3 of Reference \cite{baez} and \S3 of Reference \cite{besseclosed}), it may have seemed that the $Spin(9)$-structures were quantitatively rather poor. The latter discovery of Friedrich \cite{friedrich2001,friedrich2003} however shows that the Cayley planes constitute just one of the total of sixteen distinguished classes of 16-dimensional Riemannian manifolds that admit a {\it weak} $Spin(9)$-structure, i.e. a $Spin(9)$-reduction of the frame bundle.

The study of $Spin(9)$-structures is strongly motivated, inter alia, by modern physics. Let us give a brief resume. An octonionic description of quantum mechanics involving $\OO P^2$ was presented in Reference \cite{G1978}. The role of the Cayley planes in connection to supergravity was discussed in References \cite{deWit,G1984} and \cite{G1985}. The Lie group $Spin(9)$ is of serious interest in general M-theory \cite{babalic,BFSS,filev,sati2009,sati2011}, and it is also linked to Seiberg--Witten theory \cite{SWth} and to supersymmetric Yang--Mills theory \cite{YM2,YM3,YM1}.

\vspace{2ex}

There is yet another significant feature of the aforementioned contribution by Friedrich and this is the subject of the present article. It is a remarkable fact, first observed by Brown and Gray \cite{BG}, that the space $\bigwedge^8(\OO^2)^*$ contains a non-trivial $Spin(9)$-invariant element $\Psi$ that is unique up to a scalar factor and yields the canonical parallel 8-form on $\OO P^2$. Naturally, via the defining restriction of the frame bundle, $\Psi$ induces a canonical (in general not parallel) 8-form on any manifold with a given weak $Spin(9)$-structure. Now let us turn our attention to this fundamental invariant.

Brown and Gray \cite{BG} initially described the $Spin(9)$-invariant 8-form $\Psi$ as a certain, rather complicated Haar integral over the subgroup $Spin(8)$ of $Spin(9)$. Already at the same year Berger \cite{berger} gave the following elegant integral formula:
\begin{equation}
\label{eq:c1}
\Psi=c_1\int_{\OO P}\pi_\L^*\,\nu_\L \,d\L,
\end{equation}
where $c_1\in\RR$ is a constant, $\nu_\L$ is the volume form on the octonionic line $\L\cong\RR^8$, $\pi_\L\maps{\OO^2}{\L}$ is the projection and $d\L$ is the Haar measure on the octonionic projective line $\OO P$ (see \S\ref{ss:spin9} for a precise definition).

The first attempts towards an algebraic expression of $\Psi$ were due to Brada and P{\'e}caut-Tison \cite{brada}, and Abe and Matsubara \cite{abe}, respectively. Unfortunately, as explained in Reference \cite{lopez2010}, neither of the two approaches was completely correct. The combinatorial formula \cite{abe} was however corrected after some time (see \cite{private,PP}).

Later on, Castrill{\'o}n L{\'o}pez et al. \cite{lopez2010} expressed the canonical 8-form in terms of the generators $\I_{ij}$ of the Lie algebra $\mathfrak{spin}(9)$. Namely,
\begin{equation}
\label{eq:c2}
\Psi=c_2\sum_{i,j,k,l=0}^8 \omega_{ij}\wedge\omega_{ik}\wedge\omega_{jl}\wedge\omega_{kl},
\end{equation}
where $c_2\in\RR$ is a constant and $\omega_{ij}:=\ip{\Cdot}{\I_{ij}\Cdot}\in\bigwedge^2(\OO^2)^*$, $\ip{\Cdot}{\Cdot}$ is the inner product on $\OO^2$ (see also \S\ref{ss:spin9}).

Further interpretation was provided by Parton and Piccini \cite{PP} who used the computer algebra system {\tt Mathematica} to compute all 702 terms of $\Psi$ directly from \eqref{eq:c1} and proved further that the 8-form is proportional to the fourth coefficient of the characteristic polynomial of the matrix $(\omega_{ij})_{i,j=0}^8$. Very recently, Castrill{\'o}n L{\'o}pez et al. \cite{lopez2017} showed that this approach differs from \eqref{eq:c2} just from a combinatorial point of view in fact.

Let us mention that the spin module $\OO^2$ of $Spin(9)$ is not the only one among \eqref{eq:BMS} possessing a non-trivial canonical form (see e.g. \cite{besse}, Table 1 on p. 311). Besides $\Psi$, one has the K\"{a}hler form $\omega$, the Kraines form $\Omega$, the associative calibration $\phi$ and the Cayley calibration $\Phi$. Respectively, they live on $V=\CC^n,\HH^n,\Imag\OO,\OO$, are of degree 2, 4, 3 and 4, and are invariant under $U(n)$, $Sp(n)Sp(1)$, $G_2$ and $Spin(7)$. All these forms are stabilised in $GL(V)$ by precisely the respective groups they are invariant under (for the case of $\Psi$ see \cite{lopez2010}, Theorem 3.1). Recall that the canonical forms play a principal role from the point of view of calibrated geometries (see \cite{calibr}).

\vspace{2ex}

It is the aim of this article to present a new algebraic formula for the $Spin(9)$-invariant 8-form. To this end we introduce the concept of {\it octonion-valued forms} (see \S3 for precise definition). Although this formalism is very intuitive and simple, it sheds, we believe, new light on how tight the connection between $Spin(9)$ and the octonions really is, and illuminates the deep octonionic nature of $\Psi$. At this stage, let us mention a very recent paper by Grigorian \cite{grigorian2017} where, to the best of our knowledge, the term `octonion-valued form' appeared for the first time, referring to an $(\Imag\OO)$-valued {\it differential} form on an octonionic $G_2$-bundle. The current article, however, had been essentially finished when the presence of Grigorian's work was revealed to us and therefore the latter did not affect our choice of notation at all.

Our method generalises naturally the standard expression of the K\"{a}hler form $\omega$ on $\CC^n$. In terms of the complex coordinate 1-forms $dz_1,\dots,dz_n$ on $\CC^n$, this canonical $U(n)$-invariant 2-form is usually given by (see e.g. \cite{tubes}, p. 102)
\begin{align}
\label{eq:kaehler}
\omega=\frac{i}2\sum_{j=1}^ndz_j\wedge\b{dz_j}.
\end{align}

The following observation is then central to our approach. In spite of being a real form, $\omega$ is regarded as an element of a bigger (real) algebra, namely, $\CC\otimes\bigwedge^\bullet(\CC^n)^*$, equipped with the wedge product arising naturally on the tensor product of two algebras and with the involution extended from $\CC$. This allows one to compress the expression in real coordinates into the symmetric formula \eqref{eq:kaehler}. Is it possible to move further along the path $\RR-\CC-\HH-\OO$ in order to obtain analogous expressions also for the other canonical invariants, in particular for the 8-form $\Psi$? The answer is `yes', but one must be slightly more careful: neither $\HH$ nor $\OO$ is commutative and $\OO$ is not even associative!

First, consider the Kraines 4-form. Let $dw_1,\dots ,dw_n$ be the quaternionic coordinate 1-forms on $\HH^n$ and let $\Omega_{ij}:=dw_i\wedge\b{dw_j}$. Then the Kraines form, regarded as an element of $\HH\otimes\bigwedge^\bullet(\HH^n)^*$, can be written as follows:
\begin{align}
\label{eq:kraines}
\Omega=-\frac14\sum_{i,j=1}^n\Omega_{ij}\wedge\b{\Omega_{ij}}.
\end{align}
Although it is quite straightforward to transform the standard definition \cite{kraines} of $\Omega$ into \eqref{eq:kraines}, we have not found this in literature and therefore the formula \eqref{eq:kraines} is derived in \S4 below.

Second, even simpler are the cases of the (self-dual) Caley calibration $\Phi$ and the associative calibration $\phi$. Let $dx$ be the octonionic coordinate 1-form on $\OO$. Then it is easy to see that
\begin{align}
\label{eq:cayley}
\Phi=-\frac1{24}(dx\wedge\b{dx})\wedge(dx\wedge\b{dx}).
\end{align}
Similarly, if $dx$ is regarded as the coordinate 1-form on $\Imag\OO$, one has $\b{dx}=-dx$ and thus
\begin{align}
\label{eq:assoccal}
\phi=-\frac1{12}\left[(dx\wedge dx)\wedge dx+dx\wedge(dx\wedge dx)\right].
\end{align}
Notice that the brackets are necessary in \eqref{eq:cayley} and \eqref{eq:assoccal} since the wedge product on $\OO\otimes\bigwedge^\bullet(\OO)^*$ and $\OO\otimes\bigwedge^\bullet(\Imag\OO)^*$ is no longer associative.

Finally, and it forms the nucleus of our paper, we apply the analogous notion to the canonical $Spin(9)$-invariant 8-form $\Psi$ that we thereby regard as an element of the algebra $\OO\otimes\bigwedge^\bullet(\OO^2)^*$. Let $dx$ and $dy$ be the octonionic coordinate 1-forms on $\OO^2$ and let us denote
\begin{align*}
\Psi_{40}&:=((\b{dx}\wedge dx)\wedge \b{dx})\wedge dx,\\
\Psi_{31}&:=((\b{dy}\wedge dx)\wedge \b{dx})\wedge dx,\\
\Psi_{13}&:=((\b{dx}\wedge dy)\wedge \b{dy})\wedge dy,\\
\Psi_{04}&:=((\b{dy}\wedge dy)\wedge \b{dy})\wedge dy.
\end{align*}
Then our main result is as follows.
\begin{theorem}
\label{thm1}
The form
\begin{equation}
\label{eq:c4}
\Psi_8:=\Psi_{40}\wedge\b{\Psi_{40}}+4\,\Psi_{31}\wedge\b{\Psi_{31}}-5\left(\Psi_{31}\wedge\Psi_{13}+\b{\Psi_{13}}\wedge\b{\Psi_{31}}\right)+4\,\Psi_{13}\wedge\b{\Psi_{13}}+\Psi_{04}\wedge\b{\Psi_{04}}
\end{equation}
is a non-trivial real multiple of the Spin(9)-invariant 8-form $\Psi$ on $\OO^2$.
\end{theorem}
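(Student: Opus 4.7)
The plan is to exploit the well-known fact, first observed by Brown and Gray \cite{BG}, that the space of $Spin(9)$-invariant 8-forms on $\OO^2$ is one-dimensional and spanned by $\Psi$. It therefore suffices to verify that $\Psi_8$ is (i)~real-valued, (ii)~non-zero, and (iii)~$Spin(9)$-invariant; the theorem follows immediately.

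For (i), I would use that octonion conjugation extends to an anti-involution on $\OO\otimes\bigwedge^\bullet(\OO^2)^*$, satisfying $\b{\alpha\wedge\beta}=(-1)^{|\alpha||\beta|}\b{\beta}\wedge\b{\alpha}$. Since each $\Psi_{ab}$ has (even) degree~$4$, the summands $\Psi_{ab}\wedge\b{\Psi_{ab}}$ are manifestly self-conjugate; and the conjugate of $\Psi_{31}\wedge\Psi_{13}$ is precisely $\b{\Psi_{13}}\wedge\b{\Psi_{31}}$, so the bracketed pair is real as well. For (ii), I would extract the coefficient in $\Psi_8$ of a top-degree real monomial supported entirely on the first octonionic coordinate: only the term $\Psi_{40}\wedge\b{\Psi_{40}}$ contributes, and a short direct calculation shows that this coefficient does not vanish.

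The heart of the argument is (iii), and I expect the main obstacle to lie here. One natural route is to invoke the description of $Spin(9)\subset SO(\OO^2)$ from \S\ref{ss:spin9}, phrased in terms of left- and right-multiplications by octonions, and to check invariance on a convenient set of generators. The subgroup $Spin(8)\subset Spin(9)$ that preserves the splitting $\OO^2=\OO\oplus\OO$ acts on the two summands by a pair of triality-related half-spin representations, under which each $\Psi_{ab}$ should be individually invariant; the delicate task is to verify invariance under an additional one-parameter family of transformations that mixes $dx$ and $dy$, for which the specific combination of coefficients $(1,4,-5,4,1)$ appears to be tuned. The genuine technical difficulty is the non-associativity of $\OO$, which prevents free rearrangement of the wedge products and forces the argument to rely on alternativity and the Moufang identities. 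Should this direct route prove unwieldy, a robust alternative is to expand $\Psi_8$ in real coordinates and match it, term by term, against the formula~\eqref{eq:c2} of Castrill{\'o}n L{\'o}pez et al., thereby exhibiting the proportionality to $\Psi$ directly and reading off the constant from any single non-vanishing monomial.
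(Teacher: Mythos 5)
Your outline captures the correct three-part structure (realness, non-triviality, invariance), and the key ideas you identify — checking $Spin(8)$-invariance of each $\Psi_{ab}$ separately, tuning the coefficients against a mixing one-parameter subgroup, and using alternativity and the Moufang identities to tame the non-associativity — are precisely what the paper does. However, the paper avoids the direct verification of $Spin(9)$-invariance that your first route would require, and replaces it by a uniqueness argument built on two lemmas that your proposal does not mention. First, the paper computes (Lemma~\ref{lem:5}, via triality and decomposition of exterior powers of the half-spin representations) that $\dim\bigl[\bigwedge^8(S_+\oplus S_-)^*\bigr]^{Spin(8)}=5$; since the five forms $\Psi_{80},\Psi_{62},\Psi_{44},\Psi_{26},\Psi_{08}$ lie in distinct bi-degrees and are non-trivial, they form a \emph{basis} of this space, so $\Psi$ is automatically a linear combination $\sum\kappa_i\Psi_{8-2i,2i}$. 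Second, the paper imposes only the necessary condition that the projection of $g_{08}(t)^*\Psi_\kappa$ onto the pure-$dx$ bi-degree $(8,0)$ agree with $\kappa_0\Psi_{80}$; computing this projection requires Lemma~\ref{lem:minus35} (that $\Real\Psi_{40}\wedge\Psi_{40}=-\tfrac35\Psi_{40}\wedge\b{\Psi_{40}}$), which is exactly what makes the factor $-\tfrac56$ in $\Psi_{44}$ give the clean answer $c^4s^4\Psi_{80}$. The resulting polynomial identity in $\cos t,\sin t$ forces $\kappa$ to be proportional to the binomial coefficients, hence $\Psi$ is proportional to $\Psi_8$. This is slicker than checking $g_{08}(t)^*\Psi_8=\Psi_8$ in all bi-degrees as a direct verification would require, and it sidesteps the concern of whether $Spin(8)$ together with one mixing one-parameter subgroup actually generates $Spin(9)$. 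Your fallback route of matching against the Castrill{\'o}n L{\'o}pez formula would also work, but it entails expanding all $702$ terms and loses the conceptual transparency the paper is after.
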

Notice that, unlike for the Kraines form $\Omega$, the aforementioned algebraic formulas for $\Psi$ are too complicated to be simply rewritten into \eqref{eq:c4}. Instead, the form $\Psi_8$ is constructed independently in several steps following the requirements of non-triviality and $Spin(9)$-invariance.

Let us emphasise two crucial advantages of our approach to the problem. First, the presented description of the form $\Psi$ allows us to verify the non-triviality and the invariance readily with very simple algebraic tools and to eliminate the role of combinatorics significantly. From this point of view, the language of octonion-valued forms we use seems to be a very natural one. Second, we are able to determine explicitly all the 702 terms $\Psi$ has in the standard basis. This explains the pattern Parton and Piccinni \cite{PP} observed in their Table 2. Notice, however, that all the computations we perform here are completely independent of any computer aid.

\vspace{2ex}

To conclude the introduction, let us briefly discuss some further aspects of our work. Namely, we would like to suggest two particular research directions that the introduced notion of octonion-valued forms may be possibly utilised in or at least related to.

First, a new dimension of the Borel--Montgomery--Samelson list \eqref{eq:BMS} was revealed at the turn of the millennium by the work of Alesker \cite{alesker2000}. He showed that for a compact subgroup $G\subset SO(n)$, the algebra of translation- and $G$-invariant continuous valuations on convex bodies in $\RR^n$ is finite-dimensional if and only if $G$ acts transitively and effectively on the sphere $S^{n-1}$. Alesker's discovery naturally launched systematic exploration of valuation algebras. The past two decades have witnessed a massive development of various algebraic structures on valuations as well as of their influence on integral geometry, pioneered by Alesker, Bernig, Fu and others (see e.g. \cite{alesker2001,alesker2003,alesker2004a,alesker2004,alesker2011,abs,AB2004,bernig2011,bf2006,bf2011,fu2006}). Nonetheless, the problem of the description of invariant valuations is still far from being solved completely, in particular the cases corresponding to the three series of symplectic groups as well as to $Spin(9)$ remain almost completely open (for partial, yet significant results see \cite{alesker2008,bernig2012,bs2014,bs2017,voide}). Our ongoing research suggests that it is natural and very convenient to apply the concept of octonion-valued forms to the case of $Spin(9)$-invariant valuations on $\OO^2$. Originally, this was our major motivation for the present work.


Second, Parton and Piccinni \cite{PP2015} very recently generalised, in a very smooth and beautiful way, their description of $\Psi$ to the canonical $Spin(10)$-invariant 8-form on $\CC^{16}\cong\CC\otimes\OO^2$. Piccinni \cite{piccinni2017} further prolonged this approach to (no longer canonical) 8-forms on $\HH^{16}$ and $\OO^{16}$ that are invariant under $Spin(12)$ and $Spin(16)$, respectively. This  naturally raises the question whether our method admits an analogical generalisation.

\vspace{2ex}

The present paper is organised as follows. First, necessary background is recalled involving the octonions and the Lie groups Spin(9) and Spin(8). Further, octonion-valued forms are defined as the elements of the algebra $\OO\otimes\bigwedge\!{}^\bullet V^*$ for a general finite-dimensional real vector space $V$, and their properties are discussed. In the third section, the formula \eqref{eq:kraines} for the Kraines form is derived. The fourth section is devoted to the proof of our main result, i.e. the 8-form $\Psi_8$ is defined through octonion-valued forms on $\OO^2$ and then proven to be non-trivial and $Spin(9)$-invariant. Finally, the explicit expression of $\Psi_8$ in the standard basis is given in the Appendix.

\subsection*{Notation}
We shall employ the following notation throughout. The superscript star will denote the adjoint mapping, the pullback or the dual vector space, depending on the context. $\linspan$ will always refer to a {\it real} linear hull. By $\id$ we shall always mean the identity mapping on $\OO\cong\RR^8$. $\diag(A,B)$ will stand for the diagonal two by two block matrix with $A$ and $B$ on the diagonal. Finally, for a G-module $S$, $S^G$ will be the subspace of $S$ consisting of $G$-invariant elements.

\subsection*{Acknowledgements}
I would like to thank my advisor Prof. Thomas Wannerer, who gave me the initial impulse to deal with this problem and supported my work by plenty of useful ideas and comments.

\section{Preliminaries}
\label{prelim}

\subsection{The Octonions}
\label{octonions}
Let us begin with a review of the biggest normed division algebra, the 8-dimensional algebra $\OO$ of {\it octonions}. As anticipated, $\OO$ is neither abelian nor associative. It is, nonetheless, still {\it alternative}, i.e. any subalgebra generated by two elements of $\OO$ is associative (see e.g. \cite{baez}, p. 149).

Working within the octonions, we shall adhere to the following conventions. At first, $\OO$ as a real vector space is just $\RR^8$ with the standard basis denoted by $\{1, e_1,\dots,e_7\}$. Then we define the involution as $\b1=1$ and $\b{e_j}=- e_j$. Regarding the algebra structure, the first basis element is the multiplicative unit; the rest of the product is determined by $e_i^2=-1$ and $e_ie_j=-e_j e_i$, $i\neq j$, together with the requirement of alternativity and the rule
\begin{equation}
\label{eq:Omul}
\left(e_{(1+i)\hspace{-1.5ex}\mod 7}\right)\!\left(e_{(2+i)\hspace{-1.5ex}\mod 7}\right)=e_{(4+i)\hspace{-1.5ex}\mod 7},\quad 1\leq i\leq 7.
\end{equation}
To illustrate this, for $i=5$, for instance, \eqref{eq:Omul} implies not only $e_6e_7=e_2$ but also
\begin{align*}
e_7e_2=e_7(e_6e_7)=-e_7(e_7e_6)=-(e_7e_7)e_6=e_6
\end{align*}
and similarly $e_2e_6=e_7$. The inner product on $\OO$ is given by $\ip{u}{v}=\Real (u\b v)$, where the {\it real-part operator} acts as $\Real(u)=\frac12(u+\b u)$, and the induced norm is denoted $\sqnorm{u}=\ip{u}{u}=u\b u$. Such defined inner product agrees with the standard Euclidean structure on $\RR^8$ and (thus) the considered basis is orthonormal with respect to it. 

Let $R_u\mape{x}{xu}$ and $L_u\mape{x}{ux}$ denote the {\it right} and {\it left}, respectively, {\it multiplication} by $u\in\OO$ in $\OO$. Then the following relations hold for any $u,v,w\in\OO$ (see Reference \cite{SW}, p. 26):
\begin{align}
\label{eq:RLuip}
\ip{R_u(v)}{w}&=\ip{v}{R_{\b u}(w)}\quad\text{and}\quad\ip{L_u(v)}{w}=\ip{v}{L_{\b u}(w)},\\
\label{eq:RuRv}
R_{\b u} R_{v}+R_{\b v} R_{u}&=L_{\b u} L_{v}+L_{\b v} L_{u}=2\ip{u}{v}\cdot\id.
\end{align}
Setting $v=u$ in \eqref{eq:RuRv} together with \eqref{eq:RLuip} implies $R_u,L_u\in O(8)$ iff $\norm{u}=1$. Moreover, since $S^7\subset\OO$ is connected and $R_1=L_1=\id$, all right and left multiplications lie in the identity component of $O(8)$ in fact, i.e. $R_u,L_u\in SO(8)$, provided $\norm{u}=1$. 

Further useful relations in $\OO$ are the so-called {\it Moufang identities} (see e.g. \cite{harvey}, p. 120):
\begin{align}
\label{eq:M1}
(uvu)w&=u(v(uw)),\\
\label{eq:M2}
w(uvu)&=((wu)v)u,\\
\label{eq:M3}
(uv)(wu)&=u(vw)u.
\end{align}
Let us emphasise that the brackets are in general necessary here due to non-associativity of $\OO$. On the other hand, one may omit them in expressions generated by at most two elements. 

\subsection{The Group Spin(9)}
\label{ss:spin9}

There are several equivalent ways to define the Lie group $Spin(9)$. Somewhat abstractly, one can say that it is the universal (two-fold) covering group of $SO(9)$. Let us present two more explicit definitions, both of them being related to the octonions.

First, $Spin(9)$ is the group of symmetries of the octonionic Hopf fibration $S^7\hookrightarrow S^{15}\rightarrow S^8$ (see Reference \cite{hopf}). By {\it symmetry} one means a rigid motion of the total space taking fibres to fibres. To describe the fibration, for any $a\in\b\OO:=\OO\cup\{\infty\}$ we define the {\it octonionic line} $\L_a\cong\OO$ as 
\begin{align*}
\L_a&:=\left\{\begin{pmatrix}u\\ua\end{pmatrix}\in\OO^2 ; u\in\OO \right\}, \text{ if }a\in\OO, \text{ and}\\
\L_\infty&:=\left\{\begin{pmatrix}0\\u\end{pmatrix}\in\OO^2 ; u\in\OO \right\}.
\end{align*}
Clearly, any two lines intersect only at the origin and $\bigcup_{a\in\b\OO}\L_a=\OO^2$. Then the projection from the total space $S^{15}=\left\{\begin{pmatrix}u_1\\u_2\end{pmatrix}\in\OO^2; \sqnorm{u_1}+\sqnorm{u_2}=1\right\}$ onto the base $\OO P=\{\L_a;a\in\b\OO\}\cong S^8$ just sends a unit vector from $\OO^2$ to the octonionic line it belongs. Clearly, the fibre over $\L_a\in\OO P$ equals to $S^{15}\cap\L_a\cong S^7$. In other words, one can say that $Spin(9)$ is the subgroup of $SO(16)$ that preserves the octonionic projective line $\OO P$ in the Grassmannian $\Grass_8(\OO^2)$.

Second, the same group can be realised in terms of Clifford algebras and spinors (see Reference \cite{harvey}, p. 288). In that case it turns out that $Spin(9)$ is generated by
\begin{equation*}
\left\{\begin{pmatrix}R_r&R_u\\R_{\b u}&-R_r\end{pmatrix}\in SO(16);r\in\RR,u\in\OO,r^2+\sqnorm{u}=1\right\}.
\end{equation*}
Related to this approach is the description of the Lie algebra $\mathfrak{spin}(9)=\mathfrak{so}(9)\subset\mathfrak{so}(16)$. Namely, consider the following nine elements of the above generating set:
\begin{equation*}
\I_j:=\begin{pmatrix}0&R_{e_j}\\ R_{\b {e_j}}& 0\end{pmatrix},0\leq j\leq 7,\quad\text{and}\quad\I_8:=\begin{pmatrix}\id&0\\ 0& -\id\end{pmatrix},
\end{equation*}
and denote $\I_{jk}:=\I_j\I_k$. The relations $\I_{jj}^2=-\id$ and $\I_{jk}=-\I_{kj}$, $j\neq k$, are easily verified, the second one using \eqref{eq:RuRv}. Therefore, whenever $j\neq k$, $\I_{jk}^{-1}=-\I_{jk}$ and so $\I_{jk}^*=-\I_{jk}$. Further, the set $\{\I_{jk};0\leq j<k\leq 8\}$ is linearly independent (see Reference \cite{PP}, Proposition 8) and its elements satisfy
\begin{equation}
\label{eq:com}
[\I_{jk},\I_{lm}] = 
\begin{cases}
0&\quad\text{if } \{j,k\}\cap\{l,m\}=\emptyset,\\
2\I_{km}&\quad\text{if } j=l,k\neq m,\\
-2\I_{kl}&\quad\text{if } j=m,k\neq l.
\end{cases}
\end{equation}
Therefore, the subspace $\linspan\{\I_{jk};0\leq j<k\leq 8\}$ is a $36$-dimensional subalgebra of $\mathfrak{so}(16)$, and since the corresponding one-parameter subgroups $g_{jk}(t):=\exp(t\I_{jk})=\cos(t)\id+\sin(t)\I_{jk}$ all belong to $Spin(9)$, it is in fact $\mathfrak{spin}(9)$.

\subsection{The Group Spin(8)}
\label{ss:spin8}
Correspondingly to $SO(8)\subset SO(9)$, the Lie group $Spin(9)$ contains the double cover $Spin(8)$ of $SO(8)$. It is discussed, e.g. in Reference \cite{harvey}, p. 278, that this Lie group can be realised, again with help of the octonions, as follows:
\begin{equation*}
Spin(8)=\left\{\begin{pmatrix}g_+&0\\0&g_-\end{pmatrix};g_+,g_-\in O(8), g_+(xy)=g_-(x)g_0(y),\text{ for all }x,y\in\OO\right\},
\end{equation*}
where the (irreducible) {\it vector representation} $\rho_0$ of $Spin(8)$ on $S_0=\OO$ is given by
\begin{equation*}
\rho_0\mape{\diag\left(g_+,g_-\right)}{g_0:=L_{\b{g_-(1)}}\circ g_+}.
\end{equation*}

As one may observe from \eqref{eq:com}, the $28$-dimensional subspace $\linspan\{\I_{jk};0\leq j<k\leq 7\}$ is a subalgebra of $\mathfrak{spin}(9)$. The corresponding one-parameter subgroups take the form
\begin{align}
\label{eq:gjk8}
g_{jk}(t)=\diag\left(R_{e_j}\circ R_{\b{ u_{jk}(t)}},R_{\b{e_j}}\circ R_{u_{jk}(t)}\right)\in Spin(8),
\end{align}
where we denoted $u_{jk}(t):=\cos(t)e_j+\sin(t)e_k\in\OO$, so they generate $Spin(8)\subset Spin(9)$.

The {\it positive} and {\it negative}, respectively, {\it spin representations} $\rho_\pm$ of $Spin(8)$ on $S_\pm=\OO$ are defined as $\rho_\pm\mape{\diag\left(g_+,g_-\right)}{g_\pm}$. Obviously, the irreducible Spin(9)-module $\OO^2$ decomposes under the action of $Spin(8)$ into two 8-dimensional irreducible components $S_+$ and $S_-$. In fact, the three modules $S_0$, $S_+$ and $S_-$ have more in common than just the same dimension. This is the content of the so-called {\it triality principle} for $Spin(8)$ (see References \cite{FH}, p. 312, \cite{harvey}, p. 275). Let us briefly explain one possible view of this phenomenon here.

First of all, the Lie group $Spin(8)$ has the following symmetric Dynkin diagram:
\begin{center}
\begin{picture}(120,100)  
\thicklines
\linethickness{0.6pt}
\put(20,50){\circle{10}}
\put(25,50){\line(1,0){40}}
\put(70,50){\circle{10}}
\put(72.5,54.33012702){\line(0.5,0.8660254040){20}}
\put(95,93.30127020){\circle{10}}
\put(72.5,45.66987298){\line(0.5,-0.8660254040){20}}
\put(95,6.6987298){\circle{10}}
\put(0,48){\makebox{$\alpha_{1}$}}
\put(80,48){\makebox{$\alpha_{2}$}}
\put(105,91.30127020){\makebox{$\alpha_{3}$}}
\put(105,4.6987298){\makebox{$\alpha_{4}$.}}
\end{picture}
\end{center}
In particular, there is a `rotational' symmetry preserving the root $\alpha_2$ and sending $\alpha_1,\alpha_3,\alpha_4$ to $\alpha_3,\alpha_4,\alpha_1$, respectively. This transformation induces clearly an automorphism of the corresponding Cartan subalgebra which then extends to an outer automorphism of the whole $\spin(8)$ (see Reference \cite{FH}, p. 338 and p. 498) and it lifts, finally, to an outer automorphism, say $\tau$, of $Spin(8)$. Using the inverse Cartan matrix, the fundamental weights $\lambda_i$ are expressed as follows:
\begin{align*}
\begin{pmatrix}\lambda_1\\\lambda_2\\\lambda_3\\\lambda_4\end{pmatrix}=\frac12\begin{pmatrix}2&2&1&1\\2&4&2&2\\1&2&2&1\\1&2&1&2\end{pmatrix}\begin{pmatrix}\alpha_1\\\alpha_2\\\alpha_3\\\alpha_4\end{pmatrix}
\end{align*}
Therefore, if $\rho$ is an irreducible representation of $Spin(8)$ with the highest weight $\sum_{i=1}^4 k_i\lambda_i$, for some $k_i\in\NN_0$, then $k_4\lambda_1+k_2\lambda_2+k_1\lambda_3+k_3\lambda_4$ is the highest weight of the (irreducible) representation $\rho\circ\tau$. It is well known that the fundamental weights are the highest weights of the vector representation $\rho_0$, the adjoint representation $\Adj$ and the positive and negative spin representations $\rho_\pm$ of $Spin(8)$, respectively. Thus, particularly, the {\it triality automorphism} $\tau$ rotates $\rho_0,\rho_+,\rho_-$ and fixes $\Adj$ in the following sense:
\begin{align*}
\rho_0\circ\tau\cong \rho_+,\quad \rho_+\circ\tau\cong \rho_-,\quad \rho_-\circ\tau\cong \rho_0\quad\text{and}\quad \Ad\circ\,\tau\cong\Ad.
\end{align*}

\section{Octonion-Valued Forms}
\label{s:OfV}
In this section, the notion of (real) alternating forms is extended by allowing them to take values in the octonions. To this end, recall that $\RR$ is naturally identified with $\linspan\{1\}\subset\OO$. Let $V$ be a $d$-dimensional real vector space. By $\RfVdeg{k}$, $0\leq k\leq d$, we denote the vector space of all $k$-forms on $V$. The exterior algebra of forms of all degrees is then $\RfV:=\bigoplus_{k=0}^d \RfVdeg{k}$.

\begin{definition}
Let $0\leq k\leq d$. We define
\begin{equation}
\bigwedge\!{}_\OO^k V^*:=\OO\otimes\bigwedge\!{}^k V^*.
\end{equation}
We call an element of $\OfVdeg{k}$ an {\it octonion-valued form of degree $k$ on $V$}. Further, we denote
\begin{equation}
\bigwedge\!{}_\OO^\bullet V^*:=\OO\otimes\bigwedge\!{}^\bullet V^*=\bigoplus_{k=0}^d \bigwedge\!{}_\OO^k V^*,
\end{equation}
the graded algebra equipped with the natural product
\begin{equation}
\label{eq:Owedge}
(u\otimes\varphi)\wedge(v\otimes\psi):=(uv)\otimes(\varphi\wedge\psi).
\end{equation}
\end{definition}

Notice that the real algebra $\OfV$ is neither associative nor alternating. Nonetheless, we find natural to denote the product \eqref{eq:Owedge} with the same wedge symbol, since it is an extension of the standard wedge product on $\RfV=\linspan\{1\}\otimes\RfV\subset\OfV$.

\begin{example}
\label{dxdy}
Let $V=\OO^2$. Then we define the {\it octonionic coordinate forms} $dx,dy\in\OfVdeg{1}$ as
\begin{equation}
dx\begin{pmatrix}u_1\\u_2\end{pmatrix}:=u_1\quad\text{and}\quad dy\begin{pmatrix}u_1\\u_2\end{pmatrix}:=u_2.
\end{equation}
If $\{e_0,\dots,e_7\}$ is an orthonormal basis of $\OO$ and $\{dx^0,\dots,dx^7,dy^0,\dots,dy^7\}$ is the corresponding canonical basis of $\RfVdeg{1}$, then
\begin{equation}
dx=\sum_{i=0}^7e_i\otimes dx^i\quad\text{and}\quad dy=\sum_{i=0}^7e_i\otimes dy^i.
\end{equation}
\end{example}

From now on, the following conventions will be adhered to. First of all, the tensor-product symbol will be omitted, i.e. $u\varphi:=u\otimes\varphi\in\OfV$, for the sake of brevity. Further, if $F\maps{\OO}{\OO}$ is an $\RR$-linear function, we define its extension to $\OfV$ by $F(u\varphi):=F(u)\varphi$. Examples of such functions we shall use are the involution, right/left multiplication by an octonion or the real-part operator.

To conclude this section, let us make three simple but important observations. First, assume $\alpha\in\OfVdeg{k}$ and $\beta\in\OfVdeg{l}$. It is straightforward to verify that
\begin{equation}
\label{eq:barwedge}
\overline{\alpha\wedge \beta}=(-1)^{kl}\,\b\beta\wedge\b\alpha.
\end{equation}
Second, for $u\varphi,v\psi\in\OfV$ one has
\begin{equation}
\label{eq:ipwedge}
\Real\left( u\varphi\wedge\b{v\psi}\right)=\Real(u\b v)\,\varphi\wedge\psi=\ip{u}{v}\,\varphi\wedge\psi.
\end{equation}
Third, since $\ip{u}{v}=\ip{\b u}{\b v}=\Real(\b u v)$, an immediate consequence of \eqref{eq:ipwedge} is that
\begin{equation}
\label{eq:realab}
\Real(\alpha\wedge\b\beta)=\Real(\b\alpha\wedge\beta)
\end{equation}
holds for any $\alpha,\beta\in\OfV$.

\section{Intermezzo: The Kraines 4-Form}

The formula \eqref{eq:kraines} for the Kraines form will be derived now. The 4-dimensional normed division algebra $\HH$ of {\it quaternions} can be viewed as a subalgebra of $\OO$, one possible choice is
\begin{align}
\HH:=\linspan\{1,e_1,e_2,e_4\}.
\end{align}
Similarly as for $\OO$, we define {\it quaternion-valued forms} on a $d$-dimensional real vector space $V$:
\begin{equation}
\bigwedge\!{}_\HH^k V^*:=\HH\otimes\bigwedge\!{}^k V^*\subset \bigwedge\!{}_\OO^k V^*\quad\text{and}\quad\bigwedge\!{}_\HH^\bullet V^*:=\HH\otimes\bigwedge\!{}^\bullet V^*=\bigoplus_{k=0}^d\bigwedge\!{}_\HH^k V^*\subset \bigwedge\!{}_\OO^\bullet V^*.
\end{equation}
Notice that, since $\OO$ is alternative and $e_4=e_1e_2$, associativity is recovered in the algebra $\HH$ and therefore the restriction of the wedge product on $\OfV$ to $\HfV$ is associative as well.

Let us recall the standard definition of the Kraines form \cite{kraines}. Assume $V=\HH^n$ and all summations being taken from $1$ to $n$ for the rest of this section. At first, we define
\begin{align}
\Omega_I(u,v):=\sum_i\ip{u_ie_1}{v_i},\quad \Omega_J(u,v):=\sum_i\ip{u_ie_2}{v_i},\quad \Omega_K(u,v):=\sum_i\ip{u_ie_4}{v_i},
\end{align}
for $u=(u_1,\dots, u_n),v=(v_1,\dots, v_n)\in\HH^n$. It is easily seen from \eqref{eq:RLuip} that $\Omega_I,\Omega_J,\Omega_K\in\RfVdeg{2}$. The {\it Kraines $4$-form} is then defined as
\begin{align}
\Omega:=\Omega_I\wedge\Omega_I+\Omega_J\wedge\Omega_J+\Omega_K\wedge\Omega_K.
\end{align}

Now we regard $\Omega$ as an element of $\HfVdeg{4}$ and express it in terms of the {\it quaternionic coordinate forms} $dw_1,\dots,dw_n\in\HfVdeg{1}$ that are naturally defined by
\begin{equation}
dw_i(u):=u_i, \quad1\leq i\leq n.
\end{equation}
For any $u,v\in\HH^n$ and $1\leq i\leq n$ we have
\begin{align*}
2\ip{u_ie_1}{v_i}=u_ie_1\b{v_i}-v_ie_1\b{u_i}=dw_i(u)e_1\b{dw_i(v)}-dw_i(v)e_1\b{dw_i(u)}=(dw_ie_1\wedge\b{ dw_i})(u,v),
\end{align*}
hence
\begin{align*}
2\Omega_I=\sum_iR_{e_1}(dw_i)\wedge\b{ dw_i}=\sum_idw_i\wedge L_{e_1}(\b{ dw_i}),
\end{align*}
and similarly for $\Omega_J$ and $\Omega_K$. The Kraines form therefore reads:
\begin{align*}
4\Omega&=\sum_{i,j}dw_i\wedge \left[ L_{e_1}(\b{dw_i})\wedge R_{e_1}(dw_j)+ L_{e_2}(\b{dw_i})\wedge R_{e_2}(dw_j)+ L_{e_4}(\b{dw_i})\wedge R_{e_4}(dw_j)\right]\wedge\b{dw_j}\\
&=\sum_{i,j}dw_i\wedge  (R_{e_1}L_{e_1}+R_{e_2}L_{e_2}+R_{e_4}L_{e_4})(\b{dw_i}\wedge dw_j)\wedge\b{dw_j}.
\end{align*}
Further, with help of the following lemma, $\Omega$ is rewritten more intrinsically, not depending on the particular choice of the basis $\{e_1,e_2,e_4\}$ of $\Imag\HH$.

\begin{lemma}
For any $\alpha\in\HfV$, we have
\begin{equation}
\label{eq:tvrzenicko}
(R_{e_1}L_{e_1}+R_{e_2}L_{e_2}+R_{e_4}L_{e_4})(\alpha)=-\alpha-2\b{\alpha}.
\end{equation}
\end{lemma}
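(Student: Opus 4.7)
The plan is to reduce the identity to a purely algebraic statement inside $\HH$ and then verify it on a basis. Since $R_{e_i}L_{e_i}$ is defined on $\HfV = \HH \otimes \bigwedge^\bullet V^*$ by acting on the $\HH$-factor alone (by the convention that $F(u\varphi) = F(u)\varphi$ for $\RR$-linear $F$), it is $\RR$-linear in $\alpha$ and independent of the form factor. Consequently, \eqref{eq:tvrzenicko} will follow once we establish the pointwise identity
\begin{equation*}
\sum_{i\in\{1,2,4\}} e_i u e_i = -u - 2\bar u, \qquad u \in \HH,
\end{equation*}
which we may freely parenthesise because $\HH$ is associative.

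First, I would split $u = \Real(u) + \Imag(u)$ and compute the action of the operator $T := \sum_{i\in\{1,2,4\}} R_{e_i}L_{e_i}$ on each piece. On the real part: since $e_i^2 = -1$, each summand contributes $-1$, so $T$ acts as multiplication by $-3$ on $\linspan\{1\}$. On the imaginary part: if $v = e_j$ with $j \in \{1,2,4\}$, then $e_j e_j e_j = -e_j$, while for $i \neq j$ in $\{1,2,4\}$ the anticommutativity $e_i e_j = -e_j e_i$ and associativity in $\HH$ give $e_i e_j e_i = -e_j e_i^2 = e_j$. Summing the three contributions yields $-e_j + e_j + e_j = e_j$, so $T$ acts as the identity on $\Imag\HH$.

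Second, I would combine these two eigenvalue observations via
\begin{equation*}
T(u) = -3\Real(u) + \Imag(u) = -3 \cdot \tfrac{u+\bar u}{2} + \tfrac{u-\bar u}{2} = -u - 2\bar u,
\end{equation*}
which is precisely \eqref{eq:tvrzenicko} on pure quaternions; tensoring with an arbitrary $\varphi \in \bigwedge^\bullet V^*$ and invoking $\RR$-linearity extends the identity to all $\alpha \in \HfV$.

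There is no real obstacle here: the only subtlety is recognising that $T$ genuinely factors through $\HH$ rather than requiring us to engage the exterior factor, and that associativity of $\HH$ (inherited from the alternativity of $\OO$ together with $e_4 = e_1 e_2$) legitimises writing $e_i u e_i$ without brackets. Once these points are noted, the computation is just the standard fact that $\sum_i e_i \,\cdot\, e_i$ on $\HH$ has eigenvalues $-3$ on $\Real\HH$ and $+1$ on $\Imag\HH$.
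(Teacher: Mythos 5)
Your proposal is correct and proves the same pointwise identity in $\HH$, but it organises the verification differently from the paper. The paper expands $\b u$ in the orthonormal basis $\{1,e_1,e_2,e_4\}$ using $\ip{a}{b}=\Real(a\b b)$ and then collects terms to isolate $e_1ue_1+e_2ue_2+e_4ue_4$; this is a single chain of identities that never singles out the real and imaginary parts of $u$. You instead diagonalise the operator $T=\sum_{i\in\{1,2,4\}} R_{e_i}L_{e_i}$, showing it acts by $-3$ on $\Real\HH$ and by $+1$ on $\Imag\HH$, and then recombine via $u=\Real(u)+\Imag(u)$ and $\b u=\Real(u)-\Imag(u)$. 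The eigenvalue picture you use is arguably a bit more conceptual and makes it transparent why the right-hand side is a combination of $u$ and $\b u$, whereas the paper's derivation is shorter to write down because it produces the formula without first identifying the two eigenspaces. Both are elementary, basis-level computations; neither route encounters any difficulty, and your remarks on why $T$ factors through $\HH$ and why associativity lets one drop brackets are exactly the points the paper also relies on (implicitly, via the convention $F(u\varphi)=F(u)\varphi$ and the discussion of $\HH\subset\OO$ preceding the lemma).
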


\begin{proof}
Without loss of generality, we may assume $\alpha=u\varphi$ for some $u\in\HH$ and $\varphi\in\RfV$. Then since $\{1,e_1,e_2,e_4\}$ is an orthonormal basis of $\HH$, for any $u\in\HH$ we can write
\begin{align*}
\b u&=\ip{1}{\b u}1+\ip{e_1}{\b u}e_1+\ip{e_2}{\b u}e_2+\ip{e_4}{\b u}e_4 \\
&=\frac12\left[(u+\b u)+(e_1u+\b u\,\b{e_1})e_1+(e_2u+\b u\,\b{e_2})e_2+(e_4 u+\b u\,\b{e_4})e_4\right]\\
&=\frac12\left(u+4\b u+e_1ue_1+e_2ue_2+e_4ue_4\right),
\end{align*}
therefore $e_1ue_1+e_2ue_2+e_4ue_4=-u-2\b u$ and \eqref{eq:tvrzenicko} follows.
\end{proof}
\noindent
Thus, taking also \eqref{eq:barwedge} into account,
\begin{align*}
4\Omega&=\sum_{i,j}dw_i\wedge  (2\b{dw_j}\wedge dw_i- \b{dw_i}\wedge dw_j)\wedge\b{dw_j}.
\end{align*}
Now, from \eqref{eq:realab} we have
\begin{equation*}
\beta:=dw_i\wedge \b{dw_j}-dw_j\wedge \b{dw_i}=2\Real(dw_i\wedge \b{dw_j})=2\Real(\b{dw_i}\wedge dw_j)=\b{dw_i}\wedge dw_j-\b{dw_j}\wedge dw_i.
\end{equation*}
Obviously, $\beta\in\RfVdeg{2}$ and so it commutes with any element of $\HfV$. Then
\begin{align*}
(2\b{dw_j}\wedge dw_i- \b{dw_i}\wedge dw_j)\wedge\b{dw_j}&=2\b{dw_j}\wedge dw_j\wedge\b{dw_i}+2\b{dw_j}\wedge\beta-\b{dw_j}\wedge dw_i\wedge\b{dw_j}-\beta\wedge\b{dw_j} \\
&=\b{dw_j}\wedge dw_j\wedge\b{dw_i}+2\b{dw_j}\wedge\beta-\b{dw_j}\wedge \beta-\beta\wedge\b{dw_j} \\
&=\b{dw_j}\wedge dw_j\wedge\b{dw_i},
\end{align*}
and making use of the notation $\Omega_{ij}=dw_i\wedge \b{dw_j}$ established in the introduction, we finally have
\begin{align}
\label{eq:Omega}
\Omega=\frac14\sum_{i,j}dw_i\wedge \b{dw_j}\wedge dw_j\wedge\b{dw_i}=-\frac14\sum_{i,j}\Omega_{ij}\wedge\b{\Omega_{ij}}.
\end{align}

\section{The Spin(9)-Invariant 8-Form}

Finally, we use the notion of octonion-valued forms introduced in \S3, now in its full extent, in order to construct a non-trivial $Spin(9)$-invariant 8-form on the octonionic plane. It turns out that again, like in the case of the Kähler 2-form and the Kraines 4-form, the only ingredients needed for this construction are the coordinate 1-forms. As we observed in the previous section, the crucial part is to find the right way to glue these building blocks together. In this case, however, things are getting even more complicated `thanks' to non-associativity of $\OO$.

Let us begin with a technical lemma. For $\alpha_1,\dots,\alpha_4\in\OfV$, $V$ is again a general vector space, we define
\begin{equation}
\calF(\alpha_1,\alpha_2,\alpha_3,\alpha_4):=((\b{\alpha_1}\wedge\alpha_2)\wedge\b{\alpha_3})\wedge\alpha_4.
\end{equation}

\begin{lemma}
\label{lem:Ru}
For any $\alpha_1,\dots,\alpha_8\in\OfV$ and $u\in\OO$, $\norm{u}=1$, we have
\begin{equation}
\label{eq:RuPsi1}
\begin{split}
&\Real\calF(R_u\alpha_1,R_{\b u}\alpha_2,R_{\b u}\alpha_3,R_{\b u}\alpha_4)\wedge\b{\calF(R_{u}\alpha_5,R_{\b u}\alpha_6,R_{\b u}\alpha_7,R_{\b u}\alpha_8)} \\
&\qquad=\Real\calF(\alpha_1,\alpha_2,\alpha_3,\alpha_4)\wedge\b{\calF(\alpha_5,\alpha_6,\alpha_7,\alpha_8)},
\end{split}
\end{equation}
and
\begin{equation}
\label{eq:RuPsi2}
\begin{split}
&\Real\calF(R_u\alpha_1,R_{\b u}\alpha_2,R_{\b u}\alpha_3,R_{\b u}\alpha_4)\wedge\calF(R_{\b u}\alpha_5,R_{u}\alpha_6,R_{u}\alpha_7,R_{u}\alpha_8) \\
&\qquad=\Real\calF(\alpha_1,\alpha_2,\alpha_3,\alpha_4)\wedge\calF(\alpha_5,\alpha_6,\alpha_7,\alpha_8).
\end{split}
\end{equation}
\end{lemma}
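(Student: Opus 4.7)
The plan is to separate the form part from the octonion part of $\calF$, reduce both identities to a purely octonionic statement, and then exploit the Moufang identities together with the orthogonality of the maps $R_{\b u}$ and $L_{\b u}$ on $\OO$ when $\|u\|=1$.

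First, by $\RR$-multilinearity of $\calF$ it suffices to prove the claim on simple tensors $\alpha_i=c_i\otimes\omega_i$ with $c_i\in\OO$ and $\omega_i\in\RfV$. Since $R_u,R_{\b u}$ act only on the octonion component, the form part $\omega_1\wedge\omega_2\wedge\omega_3\wedge\omega_4$ of $\calF(\alpha_1,\alpha_2,\alpha_3,\alpha_4)$ is preserved, while its octonion part $f(c_1,c_2,c_3,c_4):=((\b{c_1}c_2)\b{c_3})c_4$ is replaced by $f(c_1u,c_2\b u,c_3\b u,c_4\b u)$. The core of the proof is then the purely octonionic identity
$$f(c_1u,c_2\b u,c_3\b u,c_4\b u)\;=\;\b u\,f(c_1,c_2,c_3,c_4)\,\b u,$$
which is unambiguous by Artin's theorem for alternative algebras. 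I would establish it by three successive Moufang manipulations: (i) apply \eqref{eq:M3} to get $(\b u\,\b{c_1})(c_2\b u)=\b u(\b{c_1}c_2)\b u$; (ii) multiply by $u\,\b{c_3}$ and use \eqref{eq:M1} together with alternativity $\b u(u\,\b{c_3})=\b{c_3}$ to reduce to $\b u((\b{c_1}c_2)\b{c_3})$; (iii) multiply by $c_4\b u$ and invoke \eqref{eq:M3} once more to reach $\b u(((\b{c_1}c_2)\b{c_3})c_4)\b u$. Swapping $u$ and $\b u$ throughout yields $f(c_5\b u,c_6u,c_7u,c_8u)=u\,f(c_5,c_6,c_7,c_8)\,u$, which is what is needed for the second $\calF$ in \eqref{eq:RuPsi2}.

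With these octonionic identities in hand, both sides of \eqref{eq:RuPsi1} and \eqref{eq:RuPsi2} become scalar multiples of $\omega_1\wedge\cdots\wedge\omega_8$, and the scalars to compare are $\Real((\b u c\,\b u)(u\,\b{c'}\,u))$ against $\Real(c\,\b{c'})$ for \eqref{eq:RuPsi1}, and $\Real((\b u c\,\b u)(u c'\,u))$ against $\Real(cc')$ for \eqref{eq:RuPsi2}. Using $\Real(xy)=\ip{x}{\b y}$ and the conjugation rule $\b{u c'u}=\b u\,\b{c'}\,\b u$, both reductions come down to the statement that the sandwich map $c\mapsto\b u c\b u$ is an isometry of $\OO$, which follows because $R_{\b u}$ and $L_{\b u}$ are orthogonal whenever $\|u\|=1$ (by \eqref{eq:RLuip} together with \eqref{eq:RuRv} applied with $v=u$).

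The main obstacle is the non-associativity of $\OO$, which demands strict bracket discipline throughout, and which makes the precise pattern $R_u,R_{\b u},R_{\b u},R_{\b u}$ (resp.\ its reverse) essential: only this pattern chains the three Moufang collapses into the clean sandwich $\b u(\cdot)\b u$, while any other arrangement would leave dangling factors that cannot be eliminated. Once the octonionic claim has been established, everything else is a routine bilinear calculation with real parts and inner products.
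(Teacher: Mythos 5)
Your proof is correct and follows essentially the same route as the paper: reduce by multilinearity to simple tensors, use the Moufang identities \eqref{eq:M1} and \eqref{eq:M3} together with alternativity to collapse the octonionic factor into the sandwich $\b u\,((\b{c_1}c_2)\b{c_3})c_4\,\b u$, and then conclude by observing that the sandwich map is an isometry of $\OO$ and invoking \eqref{eq:ipwedge}. The only cosmetic difference is that you isolate the sandwich identity as a stand-alone octonionic lemma before comparing real parts, whereas the paper carries out the same Moufang manipulations inline within the inner-product computation.
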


\begin{proof}
Since the mapping $\calF$ is $\RR$-multilinear, we may without loss of any generality assume $\alpha_i=u_i\varphi_i$ for some $u_i\in\OO$ and $\varphi_i\in\RfV$, $1\leq i\leq 8$. Thus, taking the Moufang identities \eqref{eq:M1} and \eqref{eq:M3} and alternativity of $\OO$ into account, we can write
\begin{align*}
[[(\b u\,\b {u_1})(u_2\b u)](u\b{ u_3})](u_4\b u )&=[[\b u(\b{u_1}u_2)\b u](u\b{ u_3})](u_4\b u ) \\
&=[\b u[(\b{ u_1} u_2)(\b u(u\b{ u_3}))]](u_4\b u ) \\
&=[\b u((\b{ u_1}u_2)\b{ u_3})](u_4\b u ) \\
&=\b u [((\b{ u_1}u_2)\b{ u_3})u_4]\b u.
\end{align*}
Then, since $R_{\overline u},L_{\overline u}\in O(8)$ for $\norm{u}=1$,
\begin{align*}
\ip{[[(\b u\,\b {u_1})(u_2\b u)](u\b{ u_3})](u_4\b u )}{[[(\b u\,\b {u_5})(u_6\b u)](u\b{ u_7})](u_8\b u )}=\ip{((\b{ u_1}u_2)\b{u_3})u_4}{((\b{ u_5}u_6)\b{u_7})u_8}
\end{align*}
and \eqref{eq:RuPsi1} follows from \eqref{eq:ipwedge}. Similarly, we have
\begin{align*}
\b{[[(u\,\b {u_5})(u_6 u)](\b u\,\b{ u_7})](u_8 u )}=\b{u [((\b{ u_5}u_6)\b{ u_7})u_8]u}=\b u\,\b{[((\b{ u_5}u_6)\b{ u_7})u_8]}\,\b u,
\end{align*}
therefore
\begin{align*}
\ip{[[(\b u\,\b {u_1})(u_2\b u)](u\b{ u_3})](u_4\b u )}{\b{[[(u\,\b {u_5})(u_6 u)](\b u\,\b{ u_7})](u_8 u )}}=\ip{((\b{ u_1}u_2)\b{u_3})u_4}{\b{((\b{ u_5}u_6)\b{u_7})u_8}},
\end{align*}
and \eqref{eq:RuPsi2} then follows from \eqref{eq:ipwedge} rewritten in the form $\Real\left( w\varphi\wedge v\psi\right)=\ip{w}{\b v}\,\varphi\wedge\psi$.
\end{proof}

From now on we shall always assume $V=\OO^2$. In this case, one can consider the bi-grading $\RfV=\bigoplus_{k,l}\RfVdeg{k,l}$ with respect to $\OO^2=\OO\oplus\OO$. In agreement with the introduction we denote
\begin{align*}
\Psi_{40}&=\calF(dx,dx,dx,dx),\\
\Psi_{31}&=\calF(dy,dx,dx,dx),\\
\Psi_{13}&=\calF(dx,dy,dy,dy),\\
\Psi_{04}&=\calF(dy,dy,dy,dy).
\end{align*}
Notice that the definition of these octonionic 4-forms is independent of the choice of a basis for $\OO$, since the same is true for the 1-forms $dx$ and $dy$. Let $\det$ be the determinant on $\OO\cong\RR^8$ such that $\det(e_0,\dots,e_7)=1$ for the standard basis introduced in \S\ref{octonions}, and let us denote
\begin{align*}
\det\!{}_1&:=(dx)^*\det,\\
\det\!{}_2&:=(dy)^*\det.
\end{align*}
Here the forms $dx,dy\maps{\OO^2}{\OO}$ are regarded as the projections onto the first and second factor of $\OO^2$, respectively. In the following lemmas, we present two ways to construct the determinants from $dx$ and $dy$.

\begin{lemma}
\label{pro:det}
\begin{align}
\label{eq:det1}
\Psi_{40}\wedge\b{\Psi_{40}}&=8! \det\!{}_1,\\
\label{eq:det2}
\Psi_{04}\wedge\b{\Psi_{04}}&=8! \det\!{}_2.
\end{align}
\end{lemma}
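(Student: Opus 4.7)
The plan is to reduce \eqref{eq:det1} to an identity of 8-forms on $\OO$, observe the left-hand side is real and top-degree, and then fix the scalar by evaluation on the standard basis.

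Since $\Psi_{40}$ involves only $dx$, both sides are pulled back from 8-forms on $\OO$ via the first-factor projection $\OO^2\to\OO$, so it suffices to prove $\Psi_{40}\wedge\overline{\Psi_{40}} = 8!\,\det$ on $\OO$ with $dx$ the identity 1-form. By \eqref{eq:barwedge} with $k=l=4$,
\[
\overline{\Psi_{40}\wedge\overline{\Psi_{40}}} = (-1)^{16}\,\Psi_{40}\wedge\overline{\Psi_{40}} = \Psi_{40}\wedge\overline{\Psi_{40}},
\]
so this 8-form is real-valued; as a top-degree real form on $\RR^8$ it must be a constant multiple of $\det$, and the constant equals its value on $(e_0,\dots,e_7)$.

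The first key step is to show $\Psi_{40}(e_{i_1},\dots,e_{i_4}) = 24\,f(e_{i_1},\dots,e_{i_4})$ on distinct basis inputs, where $f(a,b,c,d):=((\overline{a}b)\overline{c})d$. The definition of the wedge product yields $\Psi_{40}(v_1,\dots,v_4) = \sum_{\sigma\in S_4}\mathrm{sgn}(\sigma)\,f(v_{\sigma(1)},\dots,v_{\sigma(4)})$, so the claim is equivalent to $f$ being totally antisymmetric on such inputs. Antisymmetry under each adjacent transposition follows from the polarized right-alternative identity $(xy)z+(xz)y = x(yz+zy)$ in $\OO$, together with orthogonality $\ip{e_i}{e_j}=0$ of distinct basis elements (with a small case check when $e_0$ appears); since adjacent transpositions generate $S_4$, this suffices.

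By the shuffle formula together with realness of $\Psi_{40}\wedge\overline{\Psi_{40}}$,
\[
(\Psi_{40}\wedge\overline{\Psi_{40}})(e_0,\dots,e_7) = 576 \sum_{|I|=4}\epsilon(I,I^c)\,\ip{f(e_I)}{f(e_{I^c})},
\]
where $\epsilon(I,I^c)=\pm 1$ is the shuffle sign and the sum runs over the $\binom{8}{4}=70$ four-subsets. Each $f(e_I)$ is a signed basis octonion $\pm e_{k(I)}$, so the pairing lies in $\{-1,0,+1\}$, and matching the target $8! = 576\cdot 70$ amounts to the combinatorial identity $\epsilon(I,I^c)\,\ip{f(e_I)}{f(e_{I^c})} = +1$ for each of the 70 subsets---this is the main obstacle. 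I would settle it by a Fano-plane case analysis: first show $k(I)=k(I^c)$ always (so the pairing is nonzero), then verify that the product of the two signs is always $+1$. The involution $I\leftrightarrow I^c$ halves the work to 35 cases. Finally, \eqref{eq:det2} follows by interchanging $dx$ and $dy$ throughout.
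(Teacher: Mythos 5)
Your overall strategy is essentially the paper's: express everything in the standard basis, exploit skew-symmetry, and reduce the evaluation to a scalar count. The first half is sound: reducing to a top form on $\OO$, the realness observation via \eqref{eq:barwedge}, and the claim $\Psi_{40}(e_{i_1},\dots,e_{i_4})=24\,f(e_{i_1},\dots,e_{i_4})$ on distinct basis inputs, because $f$ is indeed antisymmetric on such inputs. (A small caveat: the polarized right-alternative identity as you have stated it gives $(u\bar c)d+(ud)\bar c=u(\bar cd+d\bar c)$, which is \emph{not} the swap you need in the last two slots; the correct tool is the relation $R_{\bar u}R_v+R_{\bar v}R_u=2\ip{u}{v}\,\id$ from \eqref{eq:RuRv}, applied once with $u,v$ and once with $\bar u,\bar v$, together with $\bar a\,b+\bar b\,a=2\ip{a}{b}$ for the first slot. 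Your conclusion is right; the cited identity isn't quite the right lever.)

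The genuine gap is where you stop. You arrive at the sum
\[
(\Psi_{40}\wedge\overline{\Psi_{40}})(e_0,\dots,e_7)=576\sum_{|I|=4}\epsilon(I,I^c)\,\ip{f(e_I)}{f(e_{I^c})}
\]
and assert, but do not prove, that each of the $70$ summands equals $+1$, deferring to a ``Fano-plane case analysis'' of $35$ cases. Your antisymmetry of $f$ within each $4$-tuple, plus the obvious symmetry under $I\leftrightarrow I^c$, exhibits invariance only under a subgroup of $S_8$ of index $35$, so nothing in your argument so far controls the sign of a generic term. The paper closes exactly this gap by showing that the full $8$-index coefficient $c(i_0,\dots,i_7)=\ip{f(e_{i_0},\dots,e_{i_3})}{f(e_{i_4},\dots,e_{i_7})}$ is totally skew-symmetric in \emph{all eight} indices: rewriting it as $\ip{R_{e_{i_4}}R_{\bar e_{i_5}}R_{e_{i_6}}R_{\bar e_{i_7}}R_{e_{i_3}}R_{\bar e_{i_2}}R_{e_{i_1}}R_{\bar e_{i_0}}(1)}{1}$ using $R_u^*=R_{\bar u}$, the anticommutation from \eqref{eq:RuRv} shows every adjacent swap of indices flips the sign. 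This makes both factors of the full sum skew-symmetric with compatible signs, so the whole sum collapses to $8!\cdot c(0,\dots,7)$, and $c(0,\dots,7)=1$ is a single direct computation. To complete your proof as written you would either need to actually carry out the $35$-case check, or adopt the stronger full-skew-symmetry observation, which replaces the case analysis by one line.
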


\begin{proof}
Let $\{e_0,\dots,e_7\}$ be the standard orthonormal basis with $e_0:=1$. It then follows from \eqref{eq:RuRv} that $R_{\b {e_i}} R_{e_j}=-R_{\b {e_j}} R_{e_i}$ for any $0\leq i<j\leq 7$. By \eqref{eq:barwedge} we have $\Psi_{40}\wedge\b{\Psi_{40}}=\Real \Psi_{40}\wedge\b{\Psi_{40}}$. Thus, according to \eqref{eq:ipwedge},
\begin{align*}
\Psi_{40}\wedge\b{\Psi_{40}}&=\sum\ip{((\b{ e_{i_0}}e_{i_1})\b {e_{i_2}})e_{i_3}}{((\b{ e_{i_4}}e_{i_5})\b {e_{i_6}})e_{i_7}}\, dx^{i_0}\wedge\cdots\wedge dx^{i_7} \\
&=\sum\ip{R_{e_{i_4}} R_{\b{e_{i_5}}}R_{e_{{i_6}}}R_{\b{e_ {i_7}}} R_{e_{{i_3}}} R_{\b{e_{i_2}}} R_{e_{{i_1}}} R_{\b{ e_{i_0}}}(1)}{1} \,dx^{i_0}\wedge\cdots\wedge dx^{i_7},
\end{align*}
where the sum extends over all indices $0\leq i_0,\dots,i_7\leq 7$, but clearly only the terms with all indices distinct occur non-trivially. Since both factors in each term of the sum are totally skew-symmetric, we can write
\begin{align*}
\Psi_{40}\wedge\b{\Psi_{40}}&=8! \ip{R_{e_4} R_{\b{e_5}} R_{e_6} R_{\b{e_7}} R_{e_3} R_{\b{e_2}} R_{e_1} R_{\b{e_0}}(1)}{1}\,dx^{0}\wedge\cdots\wedge dx^{7} \\
&=8!\,dx^{0}\wedge\cdots\wedge dx^{7} \\
&=8! \det\!{}_1,
\end{align*}
to show \eqref{eq:det1}. Here we used $R_{e_4} R_{\b{e_5}} R_{e_6} R_{\b{e_7}} R_{e_3} R_{\b{e_2}} R_{e_1} R_{\b{e_0}}(1)=1$ which is easily verified by direct computation. The proof of \eqref{eq:det2} is completely analogous.
\end{proof}

\begin{remark}
\label{re:non-asoc}
Let $\{ e_0,\dots,e_7\}$ be the standard basis of $\OO$. Recall that $\b {e_i}=\pm e_i$, $e_ie_j=\pm e_je_i$ and that a product of two basis elements is, at most up to a sign, a member of the basis as well. Therefore we can write
\begin{align}
\label{eq:non-asoc}
e_i(e_je_k)=\varepsilon_1 e_i(e_ke_j)=\varepsilon_2 e_k(e_ie_j)=\varepsilon_3 (e_ie_j)e_k,
\end{align}
where the signs $\varepsilon_1,\varepsilon_2,\varepsilon_3=\pm1$ are in general independent of each other. The middle equality in \eqref{eq:non-asoc} follows, for $i\neq k$, from \eqref{eq:RuRv} and is trivial when $i=k$. All in all, a particular ordering of any product of the basis elements has effect on the sign at most. In this sense, the aforementioned relation $R_{e_4} R_{\b{e_5}} R_{e_6} R_{\b{e_7}} R_{e_3} R_{\b{e_2}} R_{e_1} R_{\b{e_0}}(1)=1$ implies
\begin{equation}
\label{eq:prod}
\prod_{k=0}^7e_k=\pm1.
\end{equation}
\end{remark}

\begin{lemma}
\label{lem:minus35}
\begin{align}
\Real\Psi_{40}\wedge\Psi_{40}&=-\frac35\, \Psi_{40}\wedge\b{\Psi_{40}}.
\end{align}
\end{lemma}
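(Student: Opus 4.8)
The plan is to expand both sides in the standard octonionic basis using \eqref{eq:ipwedge}, exactly as in the proof of Lemma \ref{pro:det}, and to recognise that the only difference between $\Real\Psi_{40}\wedge\Psi_{40}$ and $\Psi_{40}\wedge\b{\Psi_{40}}$ is a change of inner-product pairing — $\ip{a}{b}$ becomes $\ip{a}{\b b}=\Real(\b a\,\b b)$ — together with a sign bookkeeping coming from \eqref{eq:barwedge}. Concretely, writing $\Psi_{40}=\sum_{i}\bigl((\b{e_{i_0}}e_{i_1})\b{e_{i_2}}\bigr)e_{i_3}\,dx^{i_0}\wedge dx^{i_1}\wedge dx^{i_2}\wedge dx^{i_3}$, only tuples with the four indices pairwise distinct contribute. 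So the computation reduces to a sum over ordered $8$-tuples $(i_0,\dots,i_7)$ that are permutations of $\{0,\dots,7\}$, the summand being the product of $dx^{0}\wedge\cdots\wedge dx^{7}$ (with the sign of the permutation) and the scalar $\Real\bigl(((\b{e_{i_0}}e_{i_1})\b{e_{i_2}})e_{i_3}\cdot((\b{e_{i_4}}e_{i_5})\b{e_{i_6}})e_{i_7}\bigr)$.

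Next I would reorganise this sum by the \emph{partition} of $\{0,\dots,7\}$ into the block $B_1=\{i_0,i_1,i_2,i_3\}$ and the block $B_2=\{i_4,i_5,i_6,i_7\}$. For a fixed unordered partition, summing over the internal orderings of each block produces, up to a global sign, a quantity built out of the associator-type products within each block, and the cross-pairing scalar becomes (again up to sign) $\Real$ of a product of two basis elements, which is $\pm1$ when the two basis elements are conjugate-equal and $0$ otherwise. Using Remark \ref{re:non-asoc} — that reordering a product of basis elements only changes a sign — one sees that the scalar is always $\pm1$ and its sign is governed by combinatorial data of the partition: when $B_2$ is obtained from $B_1$ (so $B_1=B_2$ is impossible since the indices are all distinct), one instead pairs the \emph{complementary} quadruples. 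The key algebraic input is a parity lemma identifying, for each of the $\binom{8}{4}=70$ partitions, how many of the internal reorderings yield $+1$ versus $-1$ in the quadratic form $\ip{\cdot}{\cdot}$ as opposed to the symmetric bilinear form $(a,b)\mapsto\Real(ab)$; the two differ precisely on the imaginary basis vectors, where $\ip{e_j}{e_j}=1$ but $\Real(e_je_j)=-1$.

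Rather than grinding through all $70$ partitions by hand, I would exploit $Spin(8)$-equivariance to cut the work down: both $\Psi_{40}\wedge\b{\Psi_{40}}$ and $\Real\Psi_{40}\wedge\Psi_{40}$ are (real) $8$-forms on $S_+\cong\OO$ that are invariant under the subgroup of $Spin(8)$ fixing $dx$ appropriately, hence each is a scalar multiple of $\det_1$ by one-dimensionality of $\bigwedge^{8}(\OO)^*$. Lemma \ref{pro:det} already gives the first constant, $8!$. So it suffices to compute the single top coefficient of $\Real\Psi_{40}\wedge\Psi_{40}$, i.e. the coefficient of $dx^{0}\wedge\cdots\wedge dx^{7}$, which by the reduction above equals $\sum_{\sigma}\operatorname{sgn}(\sigma)\,\Real\bigl(((\b{e_{\sigma(0)}}e_{\sigma(1)})\b{e_{\sigma(2)}})e_{\sigma(3)}\cdot((\b{e_{\sigma(4)}}e_{\sigma(5)})\b{e_{\sigma(6)}})e_{\sigma(7)}\bigr)$ over all $\sigma\in S_8$. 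Grouping $\sigma$ by the pair of unordered quadruples $(B_1,B_2)$ and evaluating the inner sum over orderings with the Moufang identities \eqref{eq:M1}–\eqref{eq:M3} and alternativity, one gets a contribution of the form $8!\cdot c$ with $c$ a sum of $\pm\frac{1}{70}$-type rational weights that I expect to total $-\tfrac35$.

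The main obstacle I anticipate is precisely the sign bookkeeping in that last step: tracking how the permutation sign, the four conjugations, the non-associative reordering signs from Remark \ref{re:non-asoc}, and the passage from $\ip{\cdot}{\cdot}$ to $\Real(\cdot\,\cdot)$ combine over each class of partitions. A clean way to organise it is to first prove the pointwise identity $\Real\bigl(((\b a b)\b c)d\cdot((\b a b)\b c)d\bigr) = -\,\norm{a}^2\norm{b}^2\norm{c}^2\norm{d}^2 + (\text{lower-order terms in the }\Real\text{'s})$ for $a,b,c,d$ mutually orthogonal imaginary units, reducing everything to counting configurations; alternatively, since the target constant is a single rational number, one can pin it down by evaluating both forms on one explicit decomposable $8$-vector (e.g. $e_0\wedge\cdots\wedge e_7$ pulled back via $dx$), which sidesteps the full combinatorial sum entirely. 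I would likely present the explicit-evaluation argument as the proof and relegate the structural remarks to motivation.
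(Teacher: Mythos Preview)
Your sketch is headed in the right direction but contains a genuine gap: it never actually performs the computation, and the proposed ``shortcuts'' are illusory.

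First, the remark that both sides are top forms on $\OO\cong\RR^8$ and hence multiples of $\det_1$ is correct but vacuous: it only says there \emph{is} a ratio, not what it is. Likewise, ``evaluating on one explicit decomposable $8$-vector'' does \emph{not} sidestep the combinatorial sum; evaluating on $e_0\wedge\cdots\wedge e_7$ is literally the same as computing the coefficient of $dx^0\wedge\cdots\wedge dx^7$, which is precisely the sum over $S_8$ you wrote down. So the proposal ends exactly where the real work begins, with ``I expect to total $-\tfrac35$''.

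What is missing is the organising principle the paper uses. Write $\Psi_{40}=\sum_{i=0}^7\Psi_{40}^i$, where $\Psi_{40}^i$ collects those terms whose octonionic coefficient $((\b{e_{i_0}}e_{i_1})\b{e_{i_2}})e_{i_3}$ lies in $\pm e_i$. Then $\Real\Psi_{40}\wedge\Psi_{40}=\Psi_{40}^0\wedge\b{\Psi_{40}^0}-\sum_{i=1}^7\Psi_{40}^i\wedge\b{\Psi_{40}^i}$, and each $\Psi_{40}^i\wedge\b{\Psi_{40}^i}=n_i(4!)^2\det_1$, where $n_i$ is the number of $4$-element subsets $\{i_0,i_1,i_2,i_3\}\subset\{0,\dots,7\}$ whose product (in the sense of Remark~\ref{re:non-asoc}) is $\pm e_i$. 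The decisive combinatorial fact, which your proposal never states, is that $n_0=14$: a subset containing $0$ contributes iff the remaining triple is one of the seven associative triples from \eqref{eq:Omul}, and by the relation $\prod_{k=0}^7 e_k=\pm1$ the complement of each such subset gives the other seven. Hence $\sum_{i\ge1}n_i=70-14=56$, and the ratio is $(14-56)/70=-\tfrac35$.

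You correctly noticed that the only difference between $\Real(ab)$ and $\ip{a}{b}$ is a sign on the imaginary units, but you did not exploit it: that observation translates directly into the decomposition above, and the $14$-vs-$56$ count is the entire content of the lemma. The pointwise identity you propose for $\Real\bigl(((\b ab)\b c)d\cdot((\b ab)\b c)d\bigr)$ is not relevant, since in the nonvanishing terms the two quadruples are complementary, not equal.
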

\begin{proof}
We shall work in the standard basis $\{e_0,\dots,e_7\}$ again. For any $0\leq i\leq 7$ we denote
\begin{align*}
\Psi_{40} ^i&=\sum ((\b{ e_{i_0}}e_{i_1})\b{ e_{i_2}})e_{i_3}\,dx^{i_0}\wedge dx^{i_1}\wedge dx^{i_2}\wedge dx^{i_3},
\end{align*}
if the sum runs over all indices with $ {e_{i_0}}e_{i_1}{ e_{i_2}}e_{i_3}=\pm e_i$ (see Remark \ref{re:non-asoc}). Again we make use of \eqref{eq:barwedge} and \eqref{eq:ipwedge} to write
\begin{align*}
\Psi_{40} ^i\wedge \b{\Psi_{40} ^i}&=\Real \Psi_{40} ^i\wedge \b{\Psi_{40} ^i} \\
&=\sum \ip{((\b {e_{i_0}}e_{i_1})\b {e_{i_2}})e_{i_3}}{((\b{ e_{i_4}}e_{i_5})\b{ e_{i_6}})e_{i_7}}\, dx^{i_0}\wedge\cdots \wedge dx^{i_7} \\
&=\sum\ip{R_{e_{i_4}} R_{\b{e_{i_5}}}R_{e_{{i_6}}}R_{\b{e_ {i_7}}} R_{e_{{i_3}}} R_{\b{e_{i_2}}} R_{e_{{i_1}}} R_{\b{ e_{i_0}}}(1)}{1} \,dx^{i_0}\wedge\cdots\wedge dx^{i_7}.
\end{align*}
Here the sums extend over all indices such that $ e_{i_0}e_{i_1} e_{i_2}e_{i_3}=\pm e_i$ (and $ e_{i_4}e_{i_5} e_{i_6}e_{i_7}=\pm e_i$ but this is redundant since the inner product would be zero otherwise). Like in the previous proof, due to skew-symmetry we further have
\begin{align*}
\Psi_{40} ^i\wedge \b{\Psi_{40} ^i}=n_i (4!)^2 \det\!{}_1,
\end{align*}
where $n_i$ denotes the number of combinations of four distinct indices $0\leq i_0,i_1,i_2,i_3\leq 7$ such that $e_{i_0}e_{i_1} e_{i_2}e_{i_3}=\pm e_i$. We claim that, among all ${8\choose4}=70$ combinations of four distinct indices, 14 of these products equal $\pm1$. To see this, assume $e_{i_0}e_{i_1} e_{i_2}e_{i_3}=\pm e_{i_4}e_{i_5} e_{i_6}e_{i_7}=\pm1$ for $i_0,\dots,i_7$ being all distinct. If one of the indices $i_0,i_1,i_2,i_3$, say $i_0$, is zero, then the others are non-zero and $e_{i_1}e_{i_2}e_{i_3}=\pm1$, hence $e_{i_3}=\pm e_{i_1}e_{i_2}$. There are precisely 7 distinct sets $\{i_1,i_2,i_3\}$ satisfying this, corresponding to the 7 cases in \eqref{eq:Omul}. Symmetrically, the other 7 combinations occur when $0\in\{i_4,i_5,i_6,i_7\}$. Together, $n_0=14$, therefore $\sum_{i=1}^7n_i=70-14=56$ and
\begin{align*}
\Real\Psi_{40}\wedge\Psi_{40}=\Psi_{40} ^0\wedge \b{\Psi_{40} ^0}-\sum_{i=1}^7  \Psi_{40}^i\wedge\b{\Psi_{40}^i}=\left(n_0-\sum_{i=1}^7 n_i\right) (4!)^2 \det\!{}_1=-\frac35\, \Psi_{40}\wedge\b{\Psi_{40}} .
\end{align*}
\end{proof}

Let us prove an auxiliary assertion from the representation theory of $Spin(8)$ before we finally proceed to the proof of the main result. Recall that the notation is kept from \S\ref{prelim}.

\begin{lemma}
\label{lem:5}
$\dim\left[\bigwedge^8\left(S_+\oplus S_-\right)^*\right]^{Spin(8)}=5$.
\end{lemma}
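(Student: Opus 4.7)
My plan is to exploit the $Spin(8)$-invariant splitting $\OO^2=S_+\oplus S_-$, which induces the grading
\[
\bigwedge\!{}^8(S_+\oplus S_-)^* = \bigoplus_{k=0}^{8} \bigwedge\!{}^k S_+^*\otimes \bigwedge\!{}^{8-k}S_-^*,
\]
preserved by $Spin(8)$. Setting $m_k:=\dim[\bigwedge^k S_+^*\otimes\bigwedge^{8-k}S_-^*]^{Spin(8)}$, I would compute each summand independently. Invoking the self-duality $S_\pm^*\cong S_\pm$ afforded by the inner product, together with the Hodge-type isomorphism $\bigwedge^{8-k}S_-^*\cong\bigwedge^k S_-$ (valid because $Spin(8)\subset SO(S_-)$ acts trivially on the one-dimensional $\bigwedge^8 S_-$), Schur's lemma identifies
\[
m_k=\dim\mathrm{Hom}_{Spin(8)}\bigl(\bigwedge\!{}^k S_+,\bigwedge\!{}^k S_-\bigr),
\]
i.e., the number of irreducible summands common to the two decompositions.

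Next, I would decompose $\bigwedge^k S_\pm$ using the triality automorphism $\tau$ recalled in \S\ref{ss:spin8}. Since $\tau$ cyclically permutes $\lambda_1,\lambda_3,\lambda_4$ while fixing $\lambda_2$, applying $\tau$ and $\tau^2$ to the classical $SO(8)$-decomposition of $\bigwedge^k V$ (with $V=S_0$) produces the analogous decompositions of $\bigwedge^k S_+$ and $\bigwedge^k S_-$. The input I need is the standard fact that $\bigwedge^k V$ is irreducible with highest weights $0,\lambda_1,\lambda_2,\lambda_3+\lambda_4$ for $k=0,1,2,3$, while $\bigwedge^4 V$ splits into the two 35-dimensional self-dual and anti-self-dual pieces of highest weights $2\lambda_3$ and $2\lambda_4$.

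The counting now follows directly. For $k=0,8$ the trivial rep appears once, so $m_0=m_8=1$. For $k=1,7$ the irreducibles $S_+$ and $S_-$ are inequivalent, so $m_1=m_7=0$. For $k=2,6$ both $\bigwedge^2 S_\pm$ are isomorphic to the adjoint $\mathfrak{spin}(8)$ (the unique 28-dim irreducible, corresponding to the $\tau$-fixed weight $\lambda_2$), hence $m_2=m_6=1$. For $k=3,5$ triality yields three mutually non-isomorphic 56-dim irreducibles $\bigwedge^3 V,\bigwedge^3 S_+,\bigwedge^3 S_-$ (with triality-rotated highest weights), so $m_3=m_5=0$. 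For $k=4$ triality rotates the pair of summands in $\bigwedge^4 V$ through the three-element set of 35-dim reps with highest weights $2\lambda_1,2\lambda_3,2\lambda_4$; each of $\bigwedge^4 V,\bigwedge^4 S_+,\bigwedge^4 S_-$ thus omits a different one of the three, whence $\bigwedge^4 S_+$ and $\bigwedge^4 S_-$ share exactly one summand and $m_4=1$. Summing gives
\[
\sum_{k=0}^{8} m_k = 1+0+1+0+1+0+1+0+1 = 5.
\]

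The most delicate step is the analysis at $k=4$, where one must correctly track the three 35-dimensional reps under triality and verify that each of the three fourth-power decompositions omits a different one of them; the remaining cases reduce to well-known facts about $\mathfrak{so}(8)$-representations combined with Schur's lemma.
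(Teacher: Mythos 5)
Your proof is correct and follows essentially the same route as the paper: pass to the grading $\bigoplus_k \bigwedge^k S_+\otimes\bigwedge^{8-k}S_-$, reduce via Hodge duality and self-duality to counting common irreducible summands in $\bigwedge^k S_+$ and $\bigwedge^k S_-$, decompose $\bigwedge^k S_0$ classically, and transport the decompositions via the triality automorphism. The only cosmetic difference is that the paper dispatches all odd $k$ at once by observing that $\diag(\id,-\id)\in Spin(8)$ acts by $-1$ on those graded pieces, whereas you handle $k=1,3$ directly by exhibiting the relevant exterior powers as inequivalent irreducibles; both arguments are sound and equally short.
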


\begin{proof}
Regarded as a representation space of $Spin(8)$,
\begin{align*}
\bigwedge\!{}^8 (S_+\oplus S_-)^*\cong \bigwedge\!{}^8 (S_+\oplus S_-)=\bigoplus_{k=0}^8 \bigwedge\!{}^k S_+\otimes\bigwedge\!{}^{8-k} S_-
\end{align*}
and thus
\begin{align*}
d:=\dim\left[\bigwedge\!{}^8 (S_+\oplus S_-)^*\right]^{Spin(8)}=\sum_{k=0}^8\dim\left[ \bigwedge\!{}^k S_+\otimes\bigwedge\!{}^{8-k} S_-\right]^{Spin(8)}.
\end{align*}
Since $\diag(\id,-\id)\in Spin(8)$, the terms of the sum corresponding to odd values of $k$ are trivial and, because $\dim S_\pm=8$ and so $\bigwedge\!{}^{8-k} S_\pm\cong \bigwedge\!{}^k S_\pm$, we in fact have $d=2d_0+2d_2+d_4$, where
\begin{align*}
d_k:=\dim\left[ \bigwedge\!{}^k S_+\otimes\bigwedge\!{}^{k} S_-\right]^{Spin(8)}.
\end{align*}

Let $\Gamma_\mu$ be an irreducible $Spin(8)$-module of the highest weight $\mu$. In particular, we have $S_0=\Gamma_{\lambda_1}$, $S_+=\Gamma_{\lambda_3}$ and $S_-=\Gamma_{\lambda_4}$. Trivially, $\bigwedge^0S_0=\Gamma_0$. It is also well known that $\bigwedge^2S_0=\Gamma_{\lambda_2}$ is the adjoint representation and that $\bigwedge^4S_0=\Gamma_{2\lambda_3}\oplus\Gamma_{2\lambda_4}$  (see \cite{FH}, \S19.2). Applying the triality principle, we further obtain $\bigwedge^0S_+=\bigwedge^0S_-=\Gamma_0$, $\bigwedge^2S_+=\bigwedge^2S_-=\Gamma_{\lambda_2}$ and $\bigwedge^4S_+=\Gamma_{2\lambda_4}\oplus\Gamma_{2\lambda_1}$ while $\bigwedge^4S_-=\Gamma_{2\lambda_1}\oplus\Gamma_{2\lambda_3}$. Counting the same factors in the decompositions of exterior powers of $S_+$ and $S_-$, we finally conclude $d_0=d_2=d_4=1$ and thus $d=5$.
\end{proof}

\begin{proof}[Proof of Theorem \ref{thm1}]
Let us denote
\begin{align*}
\Psi_{80}&:=\Psi_{40}\wedge\b{\Psi_{40}},\\
\Psi_{62}&:=\Psi_{31}\wedge\b{\Psi_{31}},\\
\Psi_{44}&:=-\frac{5}{6}\left(\Psi_{31}\wedge\Psi_{13}+\b{\Psi_{13}}\wedge\b{\Psi_{31}}\right)=-\frac53\Real\Psi_{31}\wedge\Psi_{13},\\
\Psi_{26}&:=\Psi_{13}\wedge\b{\Psi_{13}},\\
\Psi_{08}&:=\Psi_{04}\wedge\b{\Psi_{04}}.
\end{align*}
We shall show that
\begin{equation}
\label{eq:psi8=}
\Psi_8=\Psi_{80}+4\,\Psi_{62}+6\,\Psi_{44}+4\,\Psi_{26}+\Psi_{08}
\end{equation}
is a non-trivial $Spin(9)$-invariant real $8$-form.

First, all summands of $\Psi_8$ belong clearly to $\OfVdeg{8}$. In fact, they are all real. This is seen from \eqref{eq:barwedge}, taking into account that $\Psi_{40},\Psi_{31},\Psi_{13},\Psi_{04}$ are of even degree.  Hence $\Psi_8\in\RfVdeg{8}$.

Second, we prove that each summand is separately $Spin(8)$-invariant. To this end, assume $0\leq j<k\leq 7$ and let us express
\begin{equation*}
g_{jk}(t)^*=g_{jk}(t)^{-1}=g_{jk}(-t)=\diag\left(R_{e_j}\circ R_{\b {u_{jk}(-t)}},R_{\b{ e_j}}\circ R_{u_{jk}(-t)}\right),
\end{equation*}
as $g_{jk}(t)\in SO(16)$. Clearly, $\norm{e_j}=\norm{u_{jk}(-t)}=1$, for any $t\in\RR$. Then, since $R_u\in SO(8)$ for $\norm{u}=1$ and the $n$-dimensional determinant is $SO(n)$-invariant, the forms $\Psi_{80}$ and $\Psi_{08}$ are $Spin(8)$-invariant according to Lemma \ref{pro:det}. The invariance of the rest follows immediately from Lemma \ref{lem:Ru} which is applied twice: first for $u=u_{jk}(-t)$ and second for $u=e_j$.

Finally, for $t\in\RR$, let us abbreviate $c:=\cos(t)$, $s:=\sin(t)$ and
\begin{equation*}
g^*:=g_{08}(t)^*=g_{08}(-t)=\begin{pmatrix}R_c&R_s\\ -R_s &R_c\end{pmatrix}.
\end{equation*}
Let, further, $\calP\maps{\RfVdeg{8}}{\RfVdeg{8,0}}$ denote the natural projection. Since $g^* dx= cdx +sdy$ and $g^* dy= -sdx +c dy$, with help of Lemma \ref{lem:minus35} it is not difficult to see that, for all $k,l$ we consider,
\begin{align*}
\calP\left(g^*\Psi_{kl}\right)&=c^ks^l\,\Psi_{80}.
\end{align*}
In particular, this shows that all the five forms $\Psi_{kl}$ are non-trivial. Further, since $\Psi_{kl}\in\RfVdeg{k,l}$, they are also linearly independent and thus, according to Lemma \ref{lem:5}, span $\left(\RfVdeg{8}\right)^{Spin(8)}$. Because $Spin(8)\subset Spin(9)$, we have $\Psi\in\left(\bigwedge\!{}^8 V^*\right)^{Spin(8)}$, and therefore there are constants $\kappa:=(\kappa_0,\dots,\kappa_4)\in\RR^5$ such that $\Psi_\kappa:=\sum_{i=0}^4\kappa_i\Psi_{8-2i,2i}$ is $Spin(9)$-invariant. In order to fix $\kappa$, we impose the condition of invariance under $g^*$. Namely, in particular,
\begin{equation*}
\calP\left(g^*\Psi_\kappa\right)=\sum_{i=0}^4\kappa_ic^{8-2i}s^{2i}\,\Psi_{80}
\end{equation*}
must equal to
\begin{equation*}
\calP\left(\Psi_\kappa\right)=\kappa_0\Psi_{80}=\kappa_0(c^2+s^2)^4\,\Psi_{80}.
\end{equation*}
It is easily seen that the solution of
\begin{equation*}
\sum_{i=0}^4\kappa_ic^{8-2i}s^{2i}= \kappa_0(c^2+s^2)^4
\end{equation*}
equals uniquely, up to scaling by $\kappa_0$, to the binomial coefficients of the fourth-power expansion. In particular, for $\kappa_0=1$ we have $\Psi_8=\Psi_\kappa$ which completes the proof.
\end{proof}

\section*{Appendix}

We now express the form $\Psi_8$ explicitly in terms of the dual basis $\{dx^0,\dots,dx^7,dy^0,\dots,dy^7\}$ of $\bigwedge^1(\OO^2)^*$ corresponding to the standard basis $\{e_0,\dots,e_7\}$ of $\OO$. Although the computations we perform to this end are slightly more technical, they are based on very elementary algebraic properties of the octonions. Basically, we just use the formula \eqref{eq:prod} together with the rule $R_{\b {e_i}} R_{e_j}=-R_{\b {e_j}} R_{e_i}$, whenever $i\neq j$, following easily from \eqref{eq:RuRv}. We shall also keep the notation from the proof of the main theorem and omit the wedge product symbol for the sake of brevity.

\subsection*{The Parts $\Psi_{80}$ and $\Psi_{08}$}
As already shown in Lemma \ref{pro:det}, both these parts consist of one element each. Namely, $\Psi_{80}=8! \,dx^0\cdots dx^7$ and $\Psi_{08}=8! \,dy^0\cdots dy^7$.

\subsection*{The Parts $\Psi_{62}$ and $\Psi_{26}$}

According to \eqref{eq:ipwedge} we have
\begin{align*}
4\,\Psi_{62}&=4\sum \ip{R_{e_{i_4}} R_{\b{e_{i_5}}}R_{e_{{i_6}}}R_{\b{e_ {i_7}}} R_{e_{{i_3}}} R_{\b{e_{i_2}}} R_{e_{{i_1}}} R_{\b{ e_{i_0}}}(1)}{1} \,dy^{i_0}dx^{i_1}dx^{i_2}dx^{i_3}dy^{i_4}dx^{i_5}dx^{i_6}dx^{i_7},
\end{align*}
or, after reordering the canonical 1-forms,
\begin{align*}
4\,\Psi_{62}&=-4\sum \ip{R_{e_{i_7}} R_{\b{e_{i_3}}}R_{e_{{i_4}}}R_{\b{e_ {i_5}}} R_{e_{{i_2}}} R_{\b{e_{i_1}}} R_{e_{{i_0}}} R_{\b{ e_{i_6}}}(1)}{1} \,dx^{i_0}\cdots dx^{i_5}dy^{i_6}dy^{i_7}.
\end{align*}
Clearly, a general term
\begin{align}
\label{eq:gt62}
-4 \ip{R_{e_{i_7}} R_{\b{e_{i_3}}}R_{e_{{i_4}}}R_{\b{e_ {i_5}}} R_{e_{{i_2}}} R_{\b{e_{i_1}}} R_{e_{{i_0}}} R_{\b{ e_{i_6}}}(1)}{1} \,dx^{i_0}\cdots dx^{i_5}dy^{i_6}dy^{i_7}
\end{align}
of this sum is possibly non-trivial only if $\#\{i_0,\dots,i_5\}=6$ and $\#\{i_6,i_7\}=2$. Hence, there are just three eventualities for $\#\{i_0,\dots,i_7\}$: 6, 7 or 8. Further necessary condition on non-triviality of \eqref{eq:gt62} is obviously, in a sense of Remark \ref{re:non-asoc},
\begin{equation}
\label{eq:prodi}
\prod_{k=0}^7e_{i_k}=\pm1.
\end{equation}

First, suppose $\#\{i_0,\dots,i_7\}=8$. This means all indices in \eqref{eq:gt62} are distinct and the inner product there is thus totally skew-symmetric. Therefore, there are ${8\choose2}=28$ distinct terms of this kind, each corresponding to a different set $\{i_6,i_7\}$, all with coefficients $\pm4\cdot2!\cdot6!=\pm8\cdot 6!$.

Second, let $\#\{i_0,\dots,i_7\}=7$, i.e. let precisely two indices coincide in \eqref{eq:gt62}. Then \eqref{eq:prodi} requires that the product of six distinct basis vectors equals $\pm1$. According to \eqref{eq:prod}, this would however mean that the product of the two remaining (and distinct) basis elements is also $\pm1$, which is impossible. We conclude, therefore, that there is no non-trivial term of this kind.

Finally, suppose $\#\{i_0,\dots,i_7\}=6$, i.e. $\{i_6,i_7\}\subset\{i_0,\dots,i_5\}$. In particular, $i_6$ agrees with precisely one element in $\{i_0,\dots,i_5\}$ and thus, after commuting the operator $R_{\b{e_{i_6}}}$ leftwards, \eqref{eq:gt62} takes the form
\begin{align*}
+4\,\ip{R_{e_{i_7}} R_{\b{e_{i_6}}}R_{e_{{i_3}}}R_{\b{e_ {i_4}}} R_{e_{{i_5}}} R_{\b{e_{i_2}}} R_{e_{{i_1}}} R_{\b{ e_{i_0}}}(1)}{1} \,dx^{i_0}\cdots dx^{i_5}dy^{i_6}dy^{i_7}
\end{align*}
that is totally skew-symmetric in $i_6,i_7$ and in $i_0,\dots,i_5$, respectively. The inner product is, however, non-zero precisely when the product of the basis elements of indices $\{i_0,\cdots,i_5\}\backslash\{i_6,i_7\}$ is $\pm1$. So, as shown during the proof of Lemma \ref{lem:minus35}, there are $14$ possibilities for the set $\{i_0,\cdots,i_5\}\backslash\{i_6,i_7\}$ and to each of them there are ${4\choose 2}=6$ choices of $\{i_6,i_7\}$. Therefore, there are $6\cdot14=84$ terms of this kind, each with prefactor $\pm4\cdot2!\cdot6!=\pm8\cdot 6!$.

The case of $\Psi_{26}$ is completely analogous.

\subsection*{The Part $\Psi_{44}$}
Now we have
\begin{align*}
6\,\Psi_{44}&=-10\sum  \ip{((\b {e_{i_0}}e_{i_1})\b {e_{i_2}})e_{i_3}}{\b{((\b{ e_{i_4}}e_{i_5})\b{ e_{i_6}})e_{i_7}}} \,dy^{i_0}dx^{i_1}dx^{i_2}dx^{i_3}dx^{i_4}dy^{i_5}dy^{i_6}dy^{i_7},
\end{align*}
so, after reordering, a general term takes the form
\begin{align}
\label{eq:gt44}
-10  \ip{((\b {e_{i_4}}e_{i_0})\b {e_{i_1}})e_{i_2}}{\b{((\b{ e_{i_3}}e_{i_5})\b{ e_{i_6}})e_{i_7}}} \,dx^{i_0}\cdots dx^{i_3}dy^{i_4}\cdots dy^{i_7},
\end{align}
that is only non-trivial if $\#\{i_0,\dots,i_3\}=\#\{i_4,\dots,i_7\}=4$, i.e. $4\leq\#\{i_0,\dots,i_7\}\leq8$. Due to the higher complexity of this case, we introduce the following product of indices: $(i,j)\mapsto{ij}$, where $ij$ is the (unique) element of $\{0,\dots,7\}$ such that $e_{ij}=\pm e_ie_j$. Such a product is clearly commutative as well as associative (see Remark \ref{re:non-asoc}). The condition \eqref{eq:prodi}, which of course still applies, translates in this language as
\begin{equation}
\label{eq:prodin}
\prod_{k=0}^7i_k=0.
\end{equation}

Let $\#\{i_0,\dots,i_7\}=8$, i.e. $\{i_0,\dots,i_3\}\cap\{i_4,\dots,i_7\}=\emptyset$. We shall distinguish two cases here. First, suppose $i_0i_1i_2i_3=0$. Then \eqref{eq:prodin} is only fulfilled if $i_4i_5i_6i_7=0$ too, i.e. if $i_5i_6i_7=i_4$. Since $i_3\neq i_4$, one has $i_5i_6i_7\neq i_3$ and thus $i_3i_5i_6i_7\neq0$. Therefore $\b{((\b{ e_{i_3}}e_{i_5})\b{ e_{i_6}})e_{i_7}}=-((\b{ e_{i_3}}e_{i_5})\b{ e_{i_6}})e_{i_7}$ and so \eqref{eq:gt44} takes the form
\begin{align*}
&+10\,\ip{((\b {e_{i_4}}e_{i_0})\b {e_{i_1}})e_{i_2}}{((\b{ e_{i_3}}e_{i_5})\b{ e_{i_6}})e_{i_7}} \,dx^{i_0}\cdots dx^{i_3}dy^{i_4}\cdots dy^{i_7}\\
&\qquad=+10\ip{R_{e_{i_3}} R_{\b{e_{i_5}}}R_{e_{{i_6}}}R_{\b{e_ {i_7}}} R_{e_{{i_2}}} R_{\b{e_{i_1}}} R_{e_{{i_0}}} R_{\b{ e_{i_4}}}(1)}{1}\,dx^{i_0}\cdots dx^{i_3}dy^{i_4}\cdots dy^{i_7},
\end{align*}
that is again totally skew-symmetric and thus the coefficient is $\pm10\cdot4!\cdot4!=\pm8\cdot 6!$. We have already shown above that there exist 14 distinct sets $\{i_0,\dots,i_3\}$, such that $i_0i_1i_2i_3=0$, and there are therefore 14 terms of this kind. Second, if $i_0i_1i_2i_3\neq0$ then, by \eqref{eq:prodin}, also $i_4i_5i_6i_7\neq0$ and thus $i_5i_6i_7\neq i_4$. If, for instance, $i_5i_6i_7= i_5$, then $i_6=i_7$, which is impossible. Similarly one shows that $i_5i_6i_7\neq i_6$ and  $i_5i_6i_7\neq i_7$. It is therefore necessary that $i_5i_6i_7\in\{i_0,i_1,i_2,i_3\}$. If $i_5i_6i_7=i_3$, we have $\b{((\b{ e_{i_3}}e_{i_5})\b{ e_{i_6}})e_{i_7}}=((\b{ e_{i_3}}e_{i_5})\b{ e_{i_6}})e_{i_7}$ and \eqref{eq:gt44} reads
\begin{align*}
-10\,\ip{((\b {e_{i_4}}e_{i_0})\b {e_{i_1}})e_{i_2}}{((\b{ e_{i_3}}e_{i_5})\b{ e_{i_6}})e_{i_7}} \,dx^{i_0}\cdots dx^{i_3}dy^{i_4}\cdots dy^{i_7}.
\end{align*}
In the three other cases $i_5i_6i_7\in\{i_0,i_1,i_2\}$, $\b{((\b{ e_{i_3}}e_{i_5})\b{ e_{i_6}})e_{i_7}}=-((\b{ e_{i_3}}e_{i_5})\b{ e_{i_6}})e_{i_7}$ and \eqref{eq:gt44} equals
\begin{align*}
+10\,\ip{((\b {e_{i_4}}e_{i_0})\b {e_{i_1}})e_{i_2}}{((\b{ e_{i_3}}e_{i_5})\b{ e_{i_6}})e_{i_7}} \,dx^{i_0}\cdots dx^{i_3}dy^{i_4}\cdots dy^{i_7}.
\end{align*}
Hence, the coefficient in front of such a term is $\pm\left(\frac{-1+3}{4}\right)\cdot 10\cdot 4!\cdot 4!=\pm4\cdot6!$. As discussed in the proof of Lemma \ref{lem:minus35}, there are 56 sets $\{i_0,\dots,i_3\}$ with  $i_0i_1i_2i_3\neq 0$ and so is the number of the corresponding terms.

If $\#\{i_0,\dots,i_7\}=7$, then \eqref{eq:prodin} could never be fulfil from exactly the same reason as in the case of $\Psi_{62}$. There is, hence, no such term again.

\begin{table}
\centering
\extrarowsep=2pt
$\begin{tabu}{|c||c|c|c|c|c|c|c|c|c|c|c|c|c|c|c|c|}
\hline
i_3 & j_0&j_0&j_0&j_0&j_1&j_1&j_1&j_1&j_2&j_2&j_2&j_2&j_3&j_3&j_3&j_3
\\\hline
i_4 & j_2&j_3&j_4&j_5& j_2&j_3&j_4&j_5& j_2&j_3&j_4&j_5& j_2&j_3&j_4&j_5
\\\hline
\e_1 & -&-&-&-&-&-&-&-&+&-&-&-&-&+&-&-
\\\hline
\e_2 & +&+&-&-&+&+&-&-&+&-&+&+&-&+&+&+
\\\hline
\e_1\e_2 &-&-&+&+&-&-&+&+&+&+&-&-&+&+&-&-
\\\hline
\end{tabu}$
\vspace{12pt}
\caption{The signs $\e_1\e_2$ in the case $j_0j_1j_2j_3=0$}
\label{t1}
\end{table}

\begin{table}
\centering
\extrarowsep=2pt
$\begin{tabu}{|c||c|c|c|c|c|c|c|c|c|c|c|c|c|c|c|c|}
\hline
i_3 & j_0&j_0&j_0&j_0&j_1&j_1&j_1&j_1&j_2&j_2&j_2&j_2&j_3&j_3&j_3&j_3
\\\hline
i_4 & j_2&j_3&j_4&j_5& j_2&j_3&j_4&j_5& j_2&j_3&j_4&j_5& j_2&j_3&j_4&j_5
\\\hline
\e_1 & -&-&-&+&-&-&+&-&-&-&-&-&-&-&-&-
\\\hline
\e_2 & +&+&-&-&+&+&-&-&+&-&+&+&-&+&+&+
\\\hline
\e_1\e_2 &-&-&+&-&-&-&-&+&-&+&-&-&+&-&-&-
\\\hline
\end{tabu}$
\vspace{12pt}
\caption{The signs $\e_1\e_2$ in the case $j_0j_1j_2j_3\neq0$}
\label{t2}
\end{table}

Let $\#\{i_0,\dots,i_7\}=6$, and denote $\{j_0,\dots,j_3\}:=\{i_0,\dots,i_3\}$ and $\{j_2,\dots,j_5\}:=\{i_4,\dots,i_7\}$. According to \eqref{eq:prodin}, we may assume $j_0j_1j_4j_5=0$, i.e. $j_0j_1=j_4j_5$. Let $\e_1,\e_2=\pm1$ be such that
\begin{align*}
\b{((\b{ e_{i_3}}e_{i_5})\b{ e_{i_6}})e_{i_7}}&=\e_1((\b{ e_{i_3}}e_{i_5})\b{ e_{i_6}})e_{i_7},\\
 R_{e_{i_3}} R_{\b{e_{i_5}}}R_{e_{{i_6}}}R_{\b{e_ {i_7}}} R_{e_{{i_2}}} R_{\b{e_{i_1}}} R_{e_{{i_0}}} R_{\b{ e_{i_4}}}&=\e_2 R_{e_{i_5}} R_{\b{e_{i_6}}}R_{e_{{i_7}}}R_{\b{e_ {i_4}}} R_{e_{{i_3}}} R_{\b{e_{i_2}}} R_{e_{{i_1}}} R_{\b{ e_{i_0}}}.
\end{align*}
Using this notation, a general term \eqref{eq:gt44} takes the form
\begin{align*}
6\,\Psi_{44}=-10\, \e_1\e_2 \ip{ R_{e_{i_5}} R_{\b{e_{i_6}}}R_{e_{{i_7}}}R_{\b{e_ {i_4}}} R_{e_{{i_3}}} R_{\b{e_{i_2}}} R_{e_{{i_1}}} R_{\b{ e_{i_0}}}(1)}{1} \,dx^{i_0}\cdots dx^{i_3}dy^{i_4}\cdots dy^{i_7}.
\end{align*}
In what follows, we shall discuss how the sign $\e_1\e_2$ alternates for different positions of $i_3$ within $\{j_0,\dots,j_3\}$ and of $i_4$ within $\{j_2,\dots,j_5\}$. Let us distinguish two separate cases. First, assume $j_0j_1j_2j_3=0$ or equivalently $j_0j_1=j_2j_3$. Then, if $i_3=j_0$ and $i_4=j_2$, for instance, one has $\{i_5,i_6,i_7\}=\{j_3,j_4,j_5\}$ and $\{i_0,i_1,i_2\}=\{j_1,j_2,j_3\}$. Therefore,
\begin{align*}
i_3i_5i_6i_7=j_0j_3j_4j_5=j_0j_3j_2j_3=j_0j_2\neq 0,
\end{align*}
meaning $((\b{ e_{i_3}}e_{i_5})\b{ e_{i_6}})e_{i_7}\neq\pm1$ and thus $\e_1=-1$. Further, since $i_3\notin \{i_5,i_6,i_7\}$, $i_4\notin \{i_0,i_1,i_2\}$ and $i_3\neq i_4$, respectively, we can write
\begin{align*}
 R_{e_{i_3}} R_{\b{e_{i_5}}}R_{e_{{i_6}}}R_{\b{e_ {i_7}}} R_{e_{{i_2}}} R_{\b{e_{i_1}}} R_{e_{{i_0}}} R_{\b{ e_{i_4}}}&= -R_{e_{i_5}} R_{\b{e_{i_6}}}R_{e_{{i_7}}}R_{\b{e_ {i_3}}} R_{e_{{i_2}}} R_{\b{e_{i_1}}} R_{e_{{i_0}}} R_{\b{ e_{i_4}}} \\
&= -R_{e_{i_5}} R_{\b{e_{i_6}}}R_{e_{{i_7}}}R_{\b{e_ {i_3}}} R_{e_{{i_4}}} R_{\b{e_{i_2}}} R_{e_{{i_1}}} R_{\b{ e_{i_0}}} \\
&= R_{e_{i_5}} R_{\b{e_{i_6}}}R_{e_{{i_7}}}R_{\b{e_ {i_4}}} R_{e_{{i_3}}} R_{\b{e_{i_2}}} R_{e_{{i_1}}} R_{\b{ e_{i_0}}},
\end{align*}
and thus $\e_2=+1$. The signs corresponding to the other positions of $i_3$ and $i_4$ are computed analogically and summarised in Table \ref{t1}. One can observe from the table that $\e_1\e_2$ equals +1 in precisely 8 cases and -1 in the 8 others, from which we conclude that the corresponding term is trivial in the end. We may thus assume $j_0j_1j_2j_3\neq0$. Then it is easily seen that the eight indices $j_0$, $j_1$, $j_2$, $j_3$, $j_0j_1j_2$, $j_0j_1j_3$, $j_0j_2j_3$ and $j_1j_2j_3$ are all distinct. Therefore, $j_4$ and $j_5$ must be among the last four ones. The requirement $j_0j_1j_4j_5=0$ however chooses the last two ones. Without loss of generality, we thus have $j_4=j_0j_2j_3$ and $j_5=j_1j_2j_3$. Now we investigate the behaviour of the sign $\e_1\e_2$ again, taking into account that $j_0j_4=j_1j_5=j_2j_3$. The results are captured in Table \ref{t2}. Clearly, $\e_2$ stays the same as in the case $j_0j_1j_2j_3=0$ but $\e_1$ alternates so that $\e_1\e_2$ is positive only in 4 cases and negative otherwise. This means that the corresponding term appears with the coefficient $\pm\left(\frac{-12+4}{16}\right)\cdot 10\cdot 4!\cdot 4!=\pm4\cdot6!$. Regarding the number of such terms, there are $56$ options for $\{j_0,\dots,j_3\}$, $j_0j_1j_2j_3\neq0$, and for each of them there are ${4\choose 2}=6$ possible partitions into $\{j_0,j_1\}$ and $\{j_2,j_3\}$. Since $\{j_4,j_5\}$ is then uniquely determined, there are altogether $56\cdot6=336$ terms of this kind.

Further, suppose that $\#\{i_0,\dots,,i_7\}=5$, i.e. that $\{i_0,\dots,i_3\}\cap\{i_4,\dots,i_7\}$ contains precisely three indices. \eqref{eq:prodin} requires that the product of the two (distinct) elements of $\{i_0,\dots,i_7\}$ that do not belong to this intersection is $0$. This is again impossible and thus there are no terms here either.

Finally, let $\#\{i_0,\dots,,i_7\}=4$, i.e. $\{i_0,\dots,i_3\}=\{i_4,\dots,i_7\}$. First, assume $i_0i_1i_2i_3=0$. If $i_3=i_4$, then it is easily seen that $\e_1=\e_2=1$. If $i_3\neq i_4$, then $\e_1=\e_2=-1$. In any case $\e_1\e_2=1$ and so 14 these terms have all coefficients $\pm10\cdot 4!\cdot4!=\pm8\cdot 6!$. Second, if $i_0i_1i_2i_3\neq 0$, then $\e_1=-1$ regardless the relation between $i_3$ and $i_4$. Since $\e_2$ does not change from the previous case, for any $i_3$ we have $\e_1\e_2=-1$ if $i_4=i_3$ and $\e_1\e_2=1$ in the three other cases of $i_4\neq i_3$. Altogether, the prefactors of these 56 terms are $\pm\left(\frac{-1+3}{4}\right)\cdot 10\cdot 4!\cdot 4!=\pm4\cdot6!$.

\subsection*{Summary}

All in all, the expression of $\Psi_8$ in the standard basis possesses 702 non-trivial terms. They are summarised in Table \ref{t3}. Each block of the table corresponds to one summand in \eqref{eq:psi8=}. Each row of the table stands for a particular class of terms of $-\frac{1}{4\cdot 6!}\Psi_8$. A general term of the class is stated in the second column and the class is further specified in the third column. In the first column, the coefficient standing in front of the terms from the respective class is given. Let us remark that we scaled the form $\Psi_8$ by $-\frac{1}{4\cdot 6!}$ in order to adhere to the conventions of \cite{PP}. Notice that the signs of the coefficients can be explicitly determined directly from the aforedescribed construction. Finally, the number of non-trivial terms within each class is given in the fourth column. Throughout the table, we assume $i_k\neq i_l$ if $k\neq l$. Recall also that the product of indices is taken in the following sense: $e_{ij}=\pm e_ie_j$.

\vspace{12pt}

\begin{table}[H]
\centering
\extrarowsep=2pt
$\begin{tabu}{|c|c|c|c|}
\hline
\text{Coefficient}&\text{Basis vector}&\text{Specification}&\text{Number}\\
\hline\hline
-14&dx^0\cdots dx^7&-&1 \\\hline\hline
\pm2&dx^{i_0}\cdots dx^{i_5}dy^{i_6}dy^{i_7}&i_0<\cdots<i_5;\,i_6<i_7&28
\\\hline
\pm 2&dx^{i_0}\cdots dx^{i_5}dy^{i_4}dy^{i_5}&i_0<\cdots<i_3;\,i_4<i_5;\,i_0i_1i_2i_3=0&84
\\\hline\hline
\pm2&dx^{i_0}\cdots dx^{i_3}dy^{i_4}\cdots dy^{i_7}&i_0<\cdots<i_3;\,i_4<\dots<i_7;\,i_0i_1i_2i_3=0&14
\\\hline
\pm1&dx^{i_0}\cdots dx^{i_3}dy^{i_4}\cdots dy^{i_7}&i_0<\cdots<i_3;\,i_4<\dots<i_7;\,i_0i_1i_2i_3\neq0&56
\\\hline
\multirow{2}{*}{$\pm1$}&\multirow{2}{*}{$dx^{i_0}\cdots dx^{i_3}dy^{i_2}\cdots dy^{i_5}$}&i_0<i_1;\,i_2<i_3;\,i_0i_1i_2i_3\neq0;&\multirow{2}{*}{$336$}\\
&&i_4=i_0i_2i_3;\,i_5=i_1i_2i_3&
\\\hline
\pm1&dx^{i_0}\cdots dx^{i_3}dy^{i_0}\cdots dy^{i_3}&i_0<\cdots<i_3;\,i_0i_1i_2i_3\neq 0&56
\\\hline
\pm2&dx^{i_0}\cdots dx^{i_3}dy^{i_0}\cdots dy^{i_3}&i_0<\cdots<i_3;\,i_0i_1i_2i_3=0&14
\\\hline\hline
\pm2&dx^{i_0}dx^{i_1}dy^{i_0}\cdots dy^{i_5}&i_0<i_1;\,i_2<\cdots<i_5;\,i_2i_3i_4i_5=0&84
\\\hline
\pm2&dx^{i_0}dx^{i_1}dy^{i_2}\cdots dy^{i_7}&i_0<i_1;\,i_2<\cdots<i_7&28
\\\hline\hline
-14&dy^0\cdots dy^7&-&1
\\\hline
\end{tabu}$
\caption{Explicit expression of the form $-\frac{1}{4\cdot 6!}\Psi_8$ in the standard basis}
\label{t3}
\end{table}

\newpage

\begin{bibdiv}
\begin{biblist}

\bib{abe}{incollection}{
      author={Abe, K.},
      author={Matsubara, M.},
       title={Invariant forms on the exceptional symmetric spaces {$FII$} and
  {$EIII$}},
        date={1997},
   booktitle={Transformation group theory ({T}aej\u on, 1996)},
   publisher={Korea Adv. Inst. Sci. Tech., Taej\u on},
       pages={3\ndash 16},
}

\bib{private}{article}{
      author={Abe, K.},
      author={Matsubara, M.},
       title={Erratum to \cite{abe}},
        date={2018},
     journal={Private communication to P. Piccinni},
}

\bib{adams1960}{article}{
      author={Adams, J.~F.},
       title={On the non-existence of elements of {H}opf invariant one},
        date={1960},
     journal={Ann. Math.},
      volume={72},
       pages={20\ndash 104},
}

\bib{alekseevskii}{article}{
      author={Alekseevskij, D.~V.},
       title={Riemannian spaces with unusual holonomy groups},
        date={1968},
     journal={Funkcional. Anal. i Prilo\v zen},
      volume={2},
      number={2},
       pages={1\ndash 10},
}

\bib{alesker2000}{article}{
      author={Alesker, S.},
       title={On {P}. {M}c{M}ullen's conjecture on translation invariant
  valuations},
        date={2000},
     journal={Adv. Math.},
      volume={155},
      number={2},
       pages={239\ndash 263},
}

\bib{alesker2001}{article}{
      author={Alesker, S.},
       title={Description of translation invariant valuations on convex sets
  with solution of {P}. {M}c{M}ullen's conjecture},
        date={2001},
     journal={Geom. Funct. Anal.},
      volume={11},
      number={2},
       pages={244\ndash 272},
}

\bib{alesker2003}{article}{
      author={Alesker, S.},
       title={Hard {L}efschetz theorem for valuations, complex integral
  geometry, and unitarily invariant valuations},
        date={2003},
     journal={J. Differential Geom.},
      volume={63},
      number={1},
       pages={63\ndash 95},
}

\bib{alesker2004a}{incollection}{
      author={Alesker, S.},
       title={Hard {L}efschetz theorem for valuations and related questions of
  integral geometry},
        date={2004},
   booktitle={Geometric aspects of functional analysis},
      series={Lecture Notes in Math.},
      volume={1850},
   publisher={Springer, Berlin},
       pages={9\ndash 20},
}

\bib{alesker2004}{article}{
      author={Alesker, S.},
       title={The multiplicative structure on continuous polynomial
  valuations},
        date={2004},
     journal={Geom. Funct. Anal.},
      volume={14},
      number={1},
       pages={1\ndash 26},
}

\bib{alesker2008}{article}{
      author={Alesker, S.},
       title={Plurisubharmonic functions on the octonionic plane and
  {$Spin(9)$}-invariant valuations on convex sets},
        date={2008},
     journal={J. Geom. Anal.},
      volume={18},
      number={3},
       pages={651\ndash 686},
}

\bib{alesker2011}{article}{
      author={Alesker, S.},
       title={A {F}ourier-type transform on translation-invariant valuations on
  convex sets},
        date={2011},
     journal={Israel J. Math.},
      volume={181},
       pages={189\ndash 294},
}

\bib{abs}{article}{
      author={Alesker, S.},
      author={Bernig, A.},
      author={Schuster, F.~E.},
       title={Harmonic analysis of translation invariant valuations},
        date={2011},
     journal={Geom. Funct. Anal.},
      volume={21},
      number={4},
       pages={751\ndash 773},
}

\bib{AB2004}{article}{
      author={Alesker, S.},
      author={Bernstein, J.},
       title={Range characterization of the cosine transform on higher
  {G}rassmannians},
        date={2004},
     journal={Adv. Math.},
      volume={184},
      number={2},
       pages={367\ndash 379},
}

\bib{YM2}{article}{
      author={Anastasiou, A.},
      author={Borsten, L.},
      author={Duff, M.~J.},
      author={Hughes, L.~J.},
      author={Nagy, S.},
       title={Super {Y}ang-{M}ills, division algebras and triality},
        date={2014},
     journal={J. High Energy Phys.},
      number={8},
       pages={080},
}

\bib{babalic}{article}{
      author={Babalic, E.~M.},
      author={Lazaroiu, C.~I.},
       title={Internal circle uplifts, transversality and stratified
  {G}-structures},
        date={2015},
     journal={J. High Energy Phys.},
      number={11},
       pages={174},
}

\bib{baez}{article}{
      author={Baez, J.~C.},
       title={The octonions},
        date={2002},
     journal={Bull. Am. Math. Soc.},
      volume={39},
      number={2},
       pages={145\ndash 205},
}

\bib{YM3}{article}{
      author={Bandos, I.},
       title={An analytic superfield formalism for tree superamplitudes in
  {$D=10$} and {$D=11$}},
        date={2018},
     journal={J. High Energy Phys.},
      number={5},
       pages={103},
}

\bib{BFSS}{article}{
      author={Banks, T.},
      author={Fischler, W.},
      author={Shenker, S.~H.},
      author={Susskind, L.},
       title={M theory as a matrix model: a conjecture},
        date={1997},
     journal={Phys. Rev. D},
      volume={55},
      number={8},
       pages={5112\ndash 5128},
}

\bib{berger1955}{article}{
      author={Berger, M.},
       title={Sur les groupes d'holonomie homog\`ene des vari\'et\'es \`a
  connexion affine et des vari\'et\'es riemanniennes},
        date={1955},
     journal={Bull. Soc. Math. France},
      volume={83},
       pages={279\ndash 330},
}

\bib{berger}{article}{
      author={Berger, M.},
       title={Du c\^ot\'e de chez {P}u},
        date={1972},
     journal={Ann. Sci. \'Ecole Norm. Sup.},
      volume={5},
       pages={1\ndash 44},
}

\bib{bernig2011}{article}{
      author={Bernig, A.},
       title={Integral geometry under {$G_2$} and {$Spin(7)$}},
        date={2011},
     journal={Israel J. Math.},
      volume={184},
       pages={301\ndash 316},
}

\bib{bernig2012}{article}{
      author={Bernig, A.},
       title={Invariant valuations on quaternionic vector spaces},
        date={2012},
     journal={J. Inst. Math. Jussieu},
      volume={11},
      number={3},
       pages={467\ndash 499},
}

\bib{bf2006}{article}{
      author={Bernig, A.},
      author={Fu, J. H.~G.},
       title={Convolution of convex valuations},
        date={2006},
     journal={Geom. Dedicata},
      volume={123},
       pages={153\ndash 169},
}

\bib{bf2011}{article}{
      author={Bernig, A.},
      author={Fu, J. H.~G.},
       title={Hermitian integral geometry},
        date={2011},
     journal={Ann. Math.},
      volume={173},
      number={2},
       pages={907\ndash 945},
}

\bib{bs2014}{article}{
      author={Bernig, A.},
      author={Solanes, G.},
       title={Classification of invariant valuations on the quaternionic
  plane},
        date={2014},
     journal={J. Funct. Anal.},
      volume={267},
      number={8},
       pages={2933\ndash 2961},
}

\bib{bs2017}{article}{
      author={Bernig, A.},
      author={Solanes, G.},
       title={Kinematic formulas on the quaternionic plane},
        date={2017},
     journal={Proc. Lond. Math. Soc.},
      volume={115},
      number={4},
       pages={725\ndash 762},
}

\bib{voide}{article}{
      author={Bernig, A.},
      author={Voide, F.},
       title={{$Spin(9)$}-invariant valuations on the octonionic plane},
        date={2016},
     journal={Israel J. Math.},
      volume={214},
      number={2},
       pages={831\ndash 855},
}

\bib{besseclosed}{book}{
      author={Besse, A.~L.},
       title={Manifolds All of Whose Geodesics are Closed},
   publisher={Springer, Berlin-New York},
        date={1978},
      volume={93},
}

\bib{besse}{book}{
      author={Besse, A.~L.},
       title={Einstein Manifolds},
      note={Classics in Mathematics},
   publisher={Springer, Berlin},
        date={2008},
}

\bib{borel}{article}{
      author={Borel, Armand},
       title={Some remarks about {L}ie groups transitive on spheres and tori},
        date={1949},
     journal={Bull. Am. Math. Soc.},
      volume={55},
       pages={580\ndash 587},
}

\bib{YM1}{article}{
      author={Borsten, L.},
      author={Dahanayake, D.},
      author={Duff, M.~J.},
      author={Ebrahim, H.},
      author={Rubens, W.},
       title={Magic square from {Yang-Mills} squared},
        date={2014},
     journal={Phys. Rev. Lett.},
      volume={112},
      number={13},
       pages={131601},
}

\bib{brada}{article}{
      author={Brada, C.},
      author={Pecaut-Tison, F.},
       title={G\'eom\'etrie du plan projectif des octaves de {C}ayley},
        date={1987},
     journal={Geom. Dedicata},
      volume={23},
      number={2},
       pages={131\ndash 154},
}

\bib{BG}{incollection}{
      author={Brown, R.~B.},
      author={Gray, A.},
       title={Riemannian manifolds with holonomy group {$Spin(9)$}},
        date={1972},
   booktitle={Differential geometry (in honor of {K}entaro {Y}ano)},
   publisher={Kinokuniya, Tokyo},
       pages={41\ndash 59},
}

\bib{lopez2010}{article}{
      author={Castrill\'on~L\'opez, M.},
      author={Gadea, P.~M.},
      author={Mykytyuk, I.~V.},
       title={The canonical eight-form on manifolds with holonomy group
  {$Spin(9)$}},
        date={2010},
     journal={Int. J. Geom. Methods Mod. Phys.},
      volume={7},
      number={7},
       pages={1159\ndash 1183},
}

\bib{lopez2017}{article}{
      author={Castrill\'on~L\'opez, M.},
      author={Gadea, P.~M.},
      author={Mykytyuk, I.~V.},
       title={On the explicit expressions of the canonical 8-form on
  {R}iemannian manifolds with {$Spin(9)$} holonomy},
        date={2017},
     journal={Abh. Math. Semin. Univ. Hambg.},
      volume={87},
      number={1},
       pages={17\ndash 22},
}

\bib{SWth}{article}{
      author={Chacaltana, O.},
      author={Distler, J.},
      author={Trimm, A.},
       title={Seiberg-{W}itten for {$Spin(n)$} with spinors},
        date={2015},
     journal={J. High Energy Phys.},
      number={8},
       pages={027},
}

\bib{deWit}{article}{
      author={de~Wit, B.},
      author={Tollst\'en, A.~K.},
      author={Nicolai, H.},
       title={Locally supersymmetric {$D=3$} nonlinear sigma models},
        date={1993},
     journal={Nucl. Phys. B},
      volume={392},
      number={1},
       pages={3\ndash 38},
}

\bib{filev}{article}{
      author={Filev, V.~G.},
      author={O'Connor, D.},
       title={The {BFSS} model on the lattice},
        date={2016},
     journal={J. High Energy Phys.},
      number={5},
       pages={167},
}

\bib{friedrich2001}{article}{
      author={Friedrich, T.},
       title={Weak {$Spin(9)$}-structures on 16-dimensional {R}iemannian
  manifolds},
        date={2001},
     journal={Asian J. Math.},
      volume={5},
      number={1},
       pages={129\ndash 160},
}

\bib{friedrich2003}{article}{
      author={Friedrich, T.},
       title={{$Spin(9)$}-structures and connections with totally
  skew-symmetric torsion},
        date={2003},
     journal={J. Geom. Phys.},
      volume={47},
      number={2-3},
       pages={197\ndash 206},
}

\bib{fu2006}{article}{
      author={Fu, J. H.~G.},
       title={Structure of the unitary valuation algebra},
        date={2006},
     journal={J. Differential Geom.},
      volume={72},
      number={3},
       pages={509\ndash 533},
}

\bib{FH}{book}{
      author={Fulton, W.},
      author={Harris, J.},
       title={Representation Theory},
      series={Graduate Texts in Mathematics},
   publisher={Springer, New York},
        date={1991},
      volume={129},
        note={A First Course, Readings in Mathematics},
}

\bib{hopf}{article}{
      author={Gluck, H.},
      author={Warner, F.},
      author={Ziller, W.},
       title={The geometry of the {H}opf fibrations},
        date={1986},
     journal={Enseign. Math.},
      volume={32},
      number={3-4},
       pages={173\ndash 198},
}

\bib{tubes}{book}{
      author={Gray, A.},
       title={Tubes},
     edition={Second},
      series={Progress in Mathematics},
   publisher={Birkh\"auser Verlag, Basel},
        date={2004},
      volume={221},
}

\bib{grigorian2017}{article}{
      author={Grigorian, S.},
       title={{$G_2$}-structures and octonion bundles},
        date={2017},
     journal={Adv. Math.},
      volume={308},
       pages={142\ndash 207},
}

\bib{G1978}{article}{
      author={G\"unaydin, M.},
      author={Piron, C.},
      author={Ruegg, H.},
       title={Moufang plane and octonionic quantum mechanics},
        date={1978},
     journal={Commun. Math. Phys.},
      volume={61},
      number={1},
       pages={69\ndash 85},
}

\bib{G1984}{article}{
      author={G\"unaydin, M.},
      author={Sierra, G.},
      author={Townsend, P.~K.},
       title={The geometry of {$N=2$} {M}axwell-{E}instein supergravity and
  {J}ordan algebras},
        date={1984},
     journal={Nucl. Phys. B},
      volume={242},
      number={1},
       pages={244\ndash 268},
}

\bib{G1985}{article}{
      author={G\"unaydin, M.},
      author={Sierra, G.},
      author={Townsend, P.~K.},
       title={Gauging the {$d=5$} {M}axwell/{E}instein supergravity theories:
  more on {J}ordan algebras},
        date={1985},
     journal={Nucl. Phys. B},
      volume={253},
      number={3-4},
       pages={573\ndash 608},
}

\bib{harvey}{book}{
      author={Harvey, F.~R.},
       title={Spinors and calibrations},
      series={Perspectives in Mathematics},
   publisher={Academic Press, Inc., Boston, MA},
        date={1990},
      volume={9},
}

\bib{calibr}{article}{
      author={Harvey, F.~R.},
      author={Lawson, H.~B., Jr.},
       title={Calibrated geometries},
        date={1982},
     journal={Acta Math.},
      volume={148},
       pages={47\ndash 157},
}

\bib{Hurwitz1922}{article}{
      author={Hurwitz, A.},
       title={\"{U}ber die {K}omposition der quadratischen {F}ormen},
        date={1922},
     journal={Math. Ann.},
      volume={88},
      number={1-2},
       pages={1\ndash 25},
}

\bib{jordan}{article}{
      author={Jordan, P.},
      author={von Neumann~J.},
      author={Wigner, E.},
       title={On an algebraic generalization of the quantum mechanical
  formalism},
        date={1934},
     journal={Ann. Math.},
      volume={35},
       pages={29\ndash 64},
}

\bib{knapp}{book}{
      author={Knapp, Anthony~W.},
       title={Lie groups beyond an introduction},
     edition={Second},
      series={Progress in Mathematics},
   publisher={Birkh\"auser Boston, Inc., Boston, MA},
        date={2002},
      volume={140},
}

\bib{kraines}{article}{
      author={Kraines, V.~Y.},
       title={Topology of quaternionic manifolds},
        date={1966},
     journal={Trans. Am. Math. Soc.},
      volume={122},
       pages={357\ndash 367},
}

\bib{montgomery}{article}{
      author={Montgomery, Deane},
      author={Samelson, Hans},
       title={Transformation groups of spheres},
        date={1943},
     journal={Ann. Math.},
      volume={44},
       pages={454\ndash 470},
}

\bib{moufang}{article}{
      author={Moufang, R.},
       title={Alternativk\"orper und der {S}atz vom vollst\"andigen {V}ierseit
  {$(D_9)$}},
        date={1933},
     journal={Abh. Math. Sem. Univ. Hamburg},
      volume={9},
      number={1},
       pages={207\ndash 222},
}

\bib{PP}{article}{
      author={Parton, M.},
      author={Piccinni, P.},
       title={{$Spin(9)$} and almost complex structures on 16-dimensional
  manifolds},
        date={2012},
     journal={Ann. Glob. Anal. Geom.},
      volume={41},
      number={3},
       pages={321\ndash 345},
}

\bib{PP2015}{article}{
      author={Parton, M.},
      author={Piccinni, P.},
       title={The even {C}lifford structure of the fourth {S}everi variety},
        date={2015},
     journal={Complex Manifolds},
      volume={2},
       pages={89\ndash 104},
}

\bib{piccinni2017}{incollection}{
      author={Piccinni, P.},
       title={On the cohomology of some exceptional symmetric spaces},
        date={2017},
   booktitle={Special metrics and group actions in geometry},
      series={Springer INdAM Ser.},
      volume={23},
   publisher={Springer, Cham},
       pages={291\ndash 305},
}

\bib{SW}{incollection}{
      author={Salamon, Dietmar~A.},
      author={Walpuski, Thomas},
       title={Notes on the octonions},
        date={2017},
   booktitle={Proceedings of the {G}\"okova {G}eometry-{T}opology {C}onference
  2016},
   publisher={G\"okova Geometry/Topology Conference (GGT), G\"okova},
       pages={1\ndash 85},
}

\bib{salamon1989}{book}{
      author={Salamon, Simon},
       title={Riemannian geometry and holonomy groups},
      series={Pitman Research Notes in Mathematics Series},
   publisher={Longman Scientific \& Technical, Harlow},
        date={1989},
      volume={201},
}

\bib{sati2009}{article}{
      author={Sati, H.},
       title={{$\Bbb O P^2$} bundles in {M}-theory},
        date={2009},
     journal={Commun. Numb. Theory Phys.},
      volume={3},
      number={3},
       pages={495\ndash 530},
}

\bib{sati2011}{article}{
      author={Sati, H.},
       title={On the geometry of the supermultiplet in {M}-theory},
        date={2011},
     journal={Int. J. Geom. Methods Mod. Phys.},
      volume={8},
      number={7},
       pages={1519\ndash 1551},
}

\bib{simons}{article}{
      author={Simons, J.},
       title={On the transitivity of holonomy systems},
        date={1962},
     journal={Ann. Math.},
      volume={76},
       pages={213\ndash 234},
}

\end{biblist}
\end{bibdiv}

\end{document}